\documentclass[reqno,11pt]{amsart}        
\usepackage{amsfonts,amsmath,amssymb,amsthm,colordvi,mathrsfs,graphicx}
\usepackage{caption,subcaption,float}
\usepackage[utf8]{inputenc}
\usepackage{mathrsfs}
\usepackage{geometry}
\usepackage{enumerate}

\usepackage{amsmath, amssymb, amsthm, geometry, hyperref}

\usepackage{tabularx}
\usepackage{tabulary}
 
\hypersetup{
  colorlinks=true,
  linkcolor=blue,
  citecolor=blue,
  urlcolor=blue
}
\usepackage[all,knot,poly]{xy}
\usepackage{tikz-cd}
\usepackage{tikz}
\usepackage{verbatim}
\usetikzlibrary{arrows}
\usetikzlibrary{knots}
\tikzset{every path/.style={line width=0.4pt},every node/.style={transform shape,knot crossing,inner sep=1.5pt},>=triangle 60,text node/.style={rectangle,transform shape=false,black}}

 \usetikzlibrary{decorations.markings, arrows.meta}

\theoremstyle{plain}      
\newtheorem{thm}{Theorem}[section]     
\newtheorem{theorem}[thm]{\bf Theorem}     
\newtheorem{corollary}[thm]{\bf Corollary}     
\newtheorem{lemma}[thm]{\bf Lemma}

\theoremstyle{remark}

\theoremstyle{definition}      
\newtheorem{definition}[thm]{Definition}     

\newcommand{\supp}{\mathop{\rm supp}\nolimits}

\newcommand{\ord}{\mathop{\rm ord}\nolimits}

\newcommand{\Log}{\mathop{\rm Log}\nolimits}

\newcommand{\val}{\mathop{\rm val}\nolimits}

\subjclass[2020]{14H50, 14T05, 30F15} 
\keywords{Maximally spares polynomials, Newton polytope, amoeba, Ronkin function, tropical degeneration}

\begin{document}

\title{Solid Amoebas of Maximally Sparse Polynomials}

\author{Mounir Nisse}

\address{Mounir Nisse\\
Department of Mathematics, Xiamen University Malaysia, Jalan Sunsuria, Bandar Sunsuria, 43900, Sepang, Selangor, Malaysia.
}
\email{mounir.nisse@gmail.com, mounir.nisse@xmu.edu.my}
\thanks{}
\thanks{This research is supported in part by Xiamen University Malaysia Research Fund (Grant no. XMUMRF/ 2020-C5/IMAT/0013).}

\maketitle

\begin{abstract}
The topology of amoebas of complex algebraic hypersurfaces is deeply
connected to the combinatorics of the Newton polytope and the convex
geometry of the Ronkin function. A long–standing conjecture of Passare
and Rullg{\aa}rd asserts that the amoeba of a maximally sparse Laurent
polynomial, whose support consists exactly of the vertices of its Newton
polytope, must be \emph{solid}, meaning that the complement of the amoeba
has precisely as many connected components as the number of vertices of
the Newton polytope. In this paper we prove this conjecture. The proof is
based on a detailed analysis of the stability of the linearity domains of
the Ronkin function under tropical degenerations of Laurent polynomials.
We show that in the maximally sparse case no new slopes corresponding to
interior lattice points can appear, forcing the amoeba complement to have
the minimal possible topology. In addition, we establish stability results
for the spines of degenerating amoebas, prove that the associated Newton
subdivision stabilizes and coincides with the tropical subdivision for
sufficiently small parameters, and derive geometric criteria controlling
the appearance of lattice points in the dual subdivision. These results
lead to a classification of three distinct regimes governing the topology
of amoeba complements according to the position of the support relative
to the Newton polytope.
\end{abstract}

\section*{Introduction}

Amoebas of complex algebraic hypersurfaces form one of the most
remarkable bridges between complex algebraic geometry, convex analysis,
and tropical geometry. Given a Laurent polynomial
\[
f(z)=\sum_{\alpha\in\supp(f)} a_\alpha z^\alpha,
\qquad z\in(\mathbb{C}^*)^n,
\]
the amoeba of the hypersurface
\[
V_f=\{z\in(\mathbb{C}^*)^n \mid f(z)=0\}
\]
is defined as the image of $V_f$ under the logarithmic map
\[
\Log(z_1,\ldots,z_n)=(\log|z_1|,\ldots,\log|z_n|).
\]
Despite the analytic nature of this definition, amoebas possess a
remarkably rigid convex structure which is governed by the combinatorics
of the Newton polytope
\[
\Delta_f=\mathrm{Conv}(\supp(f)).
\]
The geometry of amoebas therefore provides a natural interface between
algebraic geometry and polyhedral geometry.

The origins of amoeba theory go back to the study of logarithmic
limit sets introduced by Bergman \cite{B-71}. Later, the work of
Gelfand, Kapranov and Zelevinsky \cite{GKZ-94} revealed deep connections
between logarithmic geometry and discriminant theory. The systematic
study of amoebas was initiated by Forsberg, Passare and Tsikh
\cite{FPT-00}, who established the fundamental structure theorem for
amoeba complements. Their results show that every connected component
of the complement
\[
\mathbb{R}^n\setminus \mathscr{A}_f
\]
is convex and admits an associated lattice point of the Newton polytope
through the order map. In particular, the number of complement
components of an amoeba is bounded above by the number of lattice points
of $\Delta_f$.

A central analytic tool in this theory is the Ronkin function
\[
N_f(x)=\frac{1}{(2\pi)^n}
\int_{[0,2\pi]^n}
\log |f(e^{x+i\theta})|\,d\theta,
\]
which is convex on $\mathbb{R}^n$ and affine on each connected component
of the complement of the amoeba. The domains of linearity of the Ronkin
function correspond exactly to these complement components. 
The  \emph{spine} of the
amoeba is  a polyhedral complex capturing the essential combinatorial
structure of the amoeba. Passare and Rullg{\aa}rd showed that this spine
can be described by a tropical polynomial approximating Ronkin
function
and is dual to a regular
subdivision of the Newton polytope \cite{PR1-04}. 
This duality reveals
a deep interaction between convex geometry and the analytic structure
of amoebas.

Amoebas also play a fundamental role in tropical geometry. Kapranov's
theorem \cite{K-00} identifies tropical hypersurfaces with
non-Archimedean amoebas, while later work of Mikhalkin \cite{M1-02}
demonstrates that complex hypersurfaces admit decompositions governed
by tropical geometry. These developments build upon the patchworking
techniques of Viro \cite{V-90} and highlight the profound relationship
between tropical and classical algebraic geometry.

Among the many problems arising in amoeba theory, one of the most
intriguing concerns the topology of the complement
\[
\mathbb{R}^n\setminus\mathscr{A}_f .
\]
In general, the complement may have a complicated structure and may
contain both bounded and unbounded components. However, certain
combinatorial conditions on the support of the polynomial impose strong
restrictions on the topology of the amoeba.

A particularly important case occurs when the polynomial is
\emph{maximally sparse}, meaning that its support consists exactly of
the vertices of its Newton polytope,
\[
\supp(f)=\mathrm{Vert}(\Delta_f).
\]
More than twenty–five years ago, Passare and Rullg{\aa}rd formulated
a striking conjecture concerning this situation. They conjectured that
the amoeba of a maximally sparse polynomial must be \emph{solid}, that
is, the complement of the amoeba should have exactly as many connected
components as the number of vertices of the Newton polytope
\cite{PR1-04,PR2-01,R-01}. In other words, each vertex of the Newton
polytope should correspond to precisely one component of the amoeba
complement and no additional components should appear.

Although this conjecture is natural from the viewpoint of convex
geometry and tropical degenerations, its proof has remained elusive for
more than two decades. The difficulty lies in the subtle analytic
behavior of the Ronkin function and the possibility that new linearity
domains could arise corresponding to interior lattice points of the
Newton polytope. Understanding why such domains cannot appear in the
maximally sparse case is a delicate problem involving both convex
analysis and tropical geometry.

The primary goal of this paper is to resolve the Passare--Rullg{\aa}rd
conjecture. Our main theorem establishes that the amoebas of maximally
sparse Laurent polynomials are indeed solid. More precisely, we prove
that the number of connected components of the complement of the amoeba
is exactly equal to the number of vertices of the Newton polytope. This
result confirms the conjecture and provides a complete description of
the topology of amoebas in the maximally sparse case.

The proof combines several geometric and analytic ingredients. We study
degenerating families of Laurent polynomials whose coefficients are
weighted by a parameter and analyze the behavior of the corresponding
Ronkin functions. A key step is the stability of the linearity domains
of the Ronkin function under tropical degenerations. We show that when
the polynomial is maximally sparse, the slopes of these domains remain
exactly the vertices of the Newton polytope and no additional slopes
associated with interior lattice points can arise.

Beyond the proof of the Passare--Rullg{\aa}rd conjecture, the paper
develops a broader framework for understanding the topology of amoeba
complements. We establish stability results for the spines of amoebas
under tropical degenerations and show that the dual subdivisions of the
Newton polytope stabilize and eventually coincide with the subdivision
determined by the tropical limit. We also derive geometric criteria
preventing lattice points from appearing in the dual subdivision
associated with the Ronkin function.

These results lead to a classification theorem describing three distinct
regimes governing the topology of amoeba complements according to the
position of the support of the polynomial relative to its Newton
polytope. Depending on whether the support consists only of vertices,
lies entirely on the boundary, or contains interior lattice points,
the complement of the amoeba exhibits qualitatively different
topological behaviors.

Taken together, the results of this paper provide a unified geometric
picture of the topology of amoebas. They show that the combinatorics of
the Newton polytope, the convex geometry of the Ronkin function, and
the polyhedral structures arising in tropical geometry together control
the global topology of amoebas. In particular, the resolution of the
Passare--Rullg{\aa}rd conjecture highlights the remarkable rigidity
that occurs when the support of a Laurent polynomial is reduced to the
vertices of its Newton polytope. In this way, the theory of amoebas continues to illustrate the deep and fruitful interaction between complex algebraic geometry, convex geometry, and tropical geometry (\cite{FPT-00,PR1-04}).

The paper is organized as follows. After recalling the necessary background on amoebas, Ronkin functions, and tropical degenerations, we establish convergence results for the spines of degenerating amoebas. We then prove the stability of Ronkin linearity domains for maximally sparse polynomials, which leads to the proof of the solidness of their amoebas. The final sections develop geometric criteria relating the Newton subdivision to the topology of the complement and culminate in a classification theorem describing the possible regimes of amoeba topology.


\section{Preliminaries}

In this paper we study algebraic hypersurfaces in the complex algebraic
torus $(\mathbb{C}^*)^n$, where $\mathbb{C}^*=\mathbb{C}\setminus\{0\}$
and $n\geq 1$. Such a hypersurface is defined as the zero locus of a
Laurent polynomial
\begin{equation}
f(z)=\sum_{\alpha\in\supp(f)} a_\alpha z^\alpha ,
\qquad
z^\alpha=z_1^{\alpha_1}\cdots z_n^{\alpha_n},
\end{equation}
where the coefficients $a_\alpha\in\mathbb{C}^*$ and the support
$\supp(f)$ is a finite subset of $\mathbb{Z}^n$. The convex hull of
$\supp(f)$ in $\mathbb{R}^n$ is called the Newton polytope of $f$ and
is denoted by $\Delta_f$. Throughout the paper we assume that
$\supp(f)\subset\mathbb{N}^n$ and that $f$ has no monomial factor
$z^\alpha$.

\medskip
 
\begin{definition}
The amoeba of the hypersurface $V_f\subset(\mathbb{C}^*)^n$ is the
image of $V_f$ under the logarithmic map
\[
\begin{array}{ccccl}
\Log & : & (\mathbb{C}^*)^n & \longrightarrow & \mathbb{R}^n \\
     &   & (z_1,\ldots,z_n) & \longmapsto & (\log|z_1|,\ldots,\log|z_n|).
\end{array}
\]
We denote this set by $\mathscr{A}_f$.
\end{definition}

A fundamental result of Forsberg, Passare and Tsikh
\cite{FPT-00} describes the geometry of the complement of an amoeba.
They proved that every connected component of the complement
$\mathbb{R}^n\setminus\mathscr{A}_f$ is convex and that these
components are naturally related to lattice points of the Newton
polytope. More precisely, there exists an injective map
\[
\ord :
\{\text{connected components of }\mathbb{R}^n\setminus\mathscr{A}_f\}
\hookrightarrow
\mathbb{Z}^n\cap\Delta_f .
\]
This map assigns to each complement component a lattice point in the
Newton polytope and is locally constant on the complement of the
amoeba. Consequently the number of complement components of an amoeba
is bounded above by the number of lattice points in $\Delta_f$.

\subsection{Non-Archimedean fields and tropical polynomials.}

Let $\mathbb{K}$ be the field of Puiseux series with real exponents.
An element of $\mathbb{K}$ is a series
\[
a(t)=\sum_{j\in A_a}\xi_j t^j ,
\]
where $\xi_j\in\mathbb{C}^*$ and $A_a\subset\mathbb{R}$ is a
well-ordered set with a smallest element. The field $\mathbb{K}$ is
algebraically closed and is endowed with the non-Archimedean
valuation
\[
\val(a)=-\min A_a .
\]
This valuation satisfies
\(
\val(ab)=\val(a)+\val(b),\) and
\(
\val(a+b)\leq\max\{\val(a),\val(b)\},
\)
and we set $\val(0)=-\infty$.
Let $f\in\mathbb{K}[z_1,\ldots,z_n]$ be a Laurent polynomial of the
form \((1)\). The hypersurface defined by $f$ in $(\mathbb{K}^*)^n$
is denoted by $V_{\mathbb{K}}$. The associated tropical polynomial is
the convex piecewise affine function
\[
f_{\mathrm{trop}}(x)
=
\max_{\alpha\in\supp(f)}
\{\val(a_\alpha)+\langle\alpha,x\rangle\},
\qquad x\in\mathbb{R}^n ,
\]
where $\langle\cdot,\cdot\rangle$ denotes the standard scalar product.
The {\em tropical hypersurface}, also  called a {\em non-Archimedean amoeba}, associated with $f$ is the subset of
$\mathbb{R}^n$ where the function $f_{\mathrm{trop}}$ fails to be
smooth. Kapranov's theorem \cite{K-00} states that this set coincides
with the valuation image of the hypersurface $V_{\mathbb{K}}$, a version in co-dimension
 larger than one can be seen MacLagan-Sturmfels in \cite{MS-15}.

\subsection{The spine of an amoeba.}

Passare and Rullg{\aa}rd \cite{PR1-04} showed that the spine of an
amoeba can be described by a tropical polynomial. Let $A'$ denote the
subset of lattice points in $\mathbb{Z}^n\cap\Delta_f$ obtained as the
image of the complement components of $\mathscr{A}_f$ under the order
map. The spine $\Gamma_f$ of the amoeba is the corner locus of the
piecewise affine function
\[
f_{\mathrm{trop}}(x)
=
\max_{\alpha\in A'}\{c_\alpha+\langle\alpha,x\rangle\},
\]
where the constants $c_\alpha$ are defined by
\begin{equation}
c_\alpha=
\Re\left(
\frac{1}{(2\pi i)^n}
\int_{\Log^{-1}(x)}
\log\left|\frac{f(z)}{z^\alpha}\right|
\frac{dz_1\wedge\cdots\wedge dz_n}{z_1\cdots z_n}
\right),
\qquad x\in E_\alpha .
\end{equation}
The function $f_{\mathrm{trop}}$ is convex and piecewise affine.
%
 

\begin{definition}
The {\em spine} of the amoeba $\mathscr{A}_f$ is the corner locus of the piecewise affine linear function $f_{\mathrm{trop}}$
  and is denoted by
\(
\Gamma_f.
\)
\end{definition}

Geometrically, the spine is a polyhedral complex in $\mathbb{R}^n$
whose faces correspond to points where two affine pieces of the
Ronkin function coincide.
Suppose that two affine pieces corresponding to $\alpha_i$ and
$\alpha_j$ meet.  Then
\[
\langle\alpha_i,x\rangle+c_{\alpha_i}
=
\langle\alpha_j,x\rangle+c_{\alpha_j}.
\]
This equation defines a hyperplane whose normal vector is
\(
\alpha_i-\alpha_j .
\)

Thus, the faces of the spine lie in hyperplanes with normal vectors
given by differences of exponent vectors appearing in the polynomial.
Since the support of the polynomial is finite, the set of possible
normal vectors is also finite.
The spine captures the essential combinatorial structure of the
amoeba.  In fact the amoeba deformation retracts onto its spine.

\vspace{0.1cm}

 
%
The tropical hypersurface $\Gamma_f$ is dual to a convex subdivision
$\tau$ of the Newton polytope $\Delta_f$. The vertices of this
subdivision correspond to the complement components of the amoeba.

\begin{definition}
The Passare-Rullg{\aa}rd function $\nu:\Delta_f\to\mathbb{R}$ is
defined by $\nu(\alpha)=-c_\alpha$ for vertices $\alpha$ of the
subdivision $\tau$, and it extends affinely on each cell of the
subdivision. 
\end{definition}

Using this function one can associate to the polynomial
$f$ a family of polynomials
\begin{equation}
f_t(z)=\sum_{\alpha\in\supp(f)}\xi_\alpha\, t^{\nu(\alpha)}z^\alpha,
\qquad t\in(0,1/e],
\end{equation}
where $\xi_\alpha=a_\alpha e^{\nu(\alpha)}$.

\subsection{Complex tropical hypersurfaces.}

Let $h>0$ and consider the diffeomorphism
\[
\begin{array}{ccccl}
H_h & : & (\mathbb{C}^*)^n & \longrightarrow & (\mathbb{C}^*)^n \\
    &   & (z_1,\ldots,z_n) & \longmapsto &
\left(|z_1|^h\frac{z_1}{|z_1|},\ldots,|z_n|^h\frac{z_n}{|z_n|}\right).
\end{array}
\]
This map induces a new complex structure
\[
J_h=(dH_h)^{-1}\circ J\circ(dH_h)
\]
on $(\mathbb{C}^*)^n$, where $J$ denotes the standard complex
structure.
A hypersurface $V_h$ is said to be $J_h$-holomorphic if it is
holomorphic with respect to this complex structure. Equivalently,
such a hypersurface can be written as $V_h=H_h(V)$ for some
holomorphic hypersurface $V$.
The Hausdorff distance between two closed subsets $A$ and $B$ of a
metric space $(E,d)$ is defined by
\[
d_{\mathcal{H}}(A,B)
=
\max\left\{
\sup_{a\in A} d(a,B),
\sup_{b\in B} d(A,b)
\right\}.
\]

\begin{definition}
A \emph{complex tropical hypersurface} $V_\infty\subset(\mathbb{C}^*)^n$
is defined as the limit, with respect to the Hausdorff metric on
compact subsets of $(\mathbb{C}^*)^n$, of a sequence of
$J_h$-holomorphic hypersurfaces $V_h$ as $h\to0$.
\end{definition}

Complex tropical hypersurfaces arise naturally as limits of families
of complex hypersurfaces and provide a geometric bridge between
classical algebraic geometry and tropical geometry. These objects
will play an important role in the study of amoebas and their spines
in the subsequent sections.



\subsection*{Degeneration of amoebas}

Let  $f$ be a Laurent polynomial with Newton polytope $\Delta_f$, 
\[
f(z)=\sum_{\alpha\in\supp(f)} a_\alpha z^\alpha.
\]
Assume that $f$ is maximally sparse, that is,
\(
\supp(f)=\mathrm{Vert}(\Delta_f).
\)
Following the tropical degeneration introduced by Passare and Rullg{\aa}rd,
we consider the family of polynomials
\[
f_t(z)=\sum_{\alpha\in\supp(f)} \xi_\alpha t^{\nu(\alpha)}z^\alpha,
\qquad t\in(0,1/e].
\]
Let $V_{f_t}\subset(\mathbb{C}^*)^n$ be the hypersurface defined by $f_t$. The family $\{f_t\}$ can be considered as a single polynomial over the field $\mathbb{K}$, and denoted by $f_{\mathbb{K}}$.
We denote by
\[
\mathscr{A}_t=\mathscr{A}_{H_t(V_{f_t})}
\]
the amoeba of the corresponding $J_t$--holomorphic hypersurface.
The large-scale geometry of $\mathscr{A}_t$ is controlled by its spine,
which is defined as the corner locus of the the piecewise affine linear function approximating 
Ronkin function (see \cite{PR1-04}).
The Ronkin function associated with $f_t$ is
\[
N_{f_t}(x)
=
\frac{1}{(2\pi i)^n}
\int_{\Log^{-1}(x)}
\log |f_t(z)|
\frac{dz_1\wedge\cdots\wedge dz_n}{z_1\cdots z_n}.
\]
This function is convex on $\mathbb{R}^n$ and affine linear on each
connected component of the complement of the amoeba.
The spine $\Gamma_t$ of the amoeba is the set where $N_{f_t}$ fails
to be differentiable.
The spines $\Gamma_t$ converge, after rescaling, to the tropical hypersurface
\[
\Gamma_\infty
=
\left\{
x\in\mathbb{R}^n
\;\middle|\;
\max_{\alpha\in\supp(f)}
\{-\nu(\alpha)+\langle\alpha,x\rangle\}
\text{ is attained at least twice}
\right\}.
\]
This tropical hypersurface is dual to a convex subdivision
\(
\tau_\infty
\)
of the Newton polytope $\Delta_f$.

\medskip

By Kapranov's theorem, the tropical hypersurface $\Gamma_{\infty}$ coincides with the non-Archimedean amoeba of $V_{\mathbb{K}}$. Moreover, results of Passare and Rullg{\aa}rd and Mikhalkin show that $\Gamma_{\infty}$ appears as the limit of the spines of the amoebas of a degenerating family of complex hypersurfaces.
More precisely, consider the family of complex polynomials
\[
f_t(z)=\sum_{\alpha\in\mathrm{Vert}(\Delta_f)}
\xi_{\alpha}t^{\nu(\alpha)}z^{\alpha},
\qquad t\in (0,1/e],
\]
and let $V_{f_t}$ denote the hypersurface defined by $f_t$. The associated amoebas $\mathscr{A}_{f_t}$ converge, after rescaling, to the tropical hypersurface $\Gamma_{\infty}$.

\begin{theorem}[Passare-Rullg{\aa}rd, Mikhalkin]
Let $\mathscr{A}_{f_t}$ denote the amoeba of $V_{f_t}$. Then
\[
\Gamma_{\infty}
=
\lim_{t\to 0}
\left(-\frac{1}{\log t}\right)\mathscr{A}_{f_t},
\]
with convergence in the Hausdorff metric on compact subsets of $\mathbb{R}^n$.
\end{theorem}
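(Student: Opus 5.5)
The plan is to prove the two halves of Hausdorff convergence on a fixed compact set $K\subset\mathbb{R}^n$ separately. Write $s=-\log t>0$, so $s\to+\infty$ as $t\to 0$, and rescale by $x=\Log(z)/s$. For $z$ with $\Log z=sx$ each monomial of $f_t$ has modulus
\[
|\xi_\alpha t^{\nu(\alpha)}z^\alpha| = |\xi_\alpha|\, e^{s(-\nu(\alpha)+\langle\alpha,x\rangle)} .
\]
So, up to the bounded factors $|\xi_\alpha|$, the sizes of the monomials are governed by the affine functions $\ell_\alpha(x)=-\nu(\alpha)+\langle\alpha,x\rangle$ whose maximum defines $\Gamma_\infty$. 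Set $C=\max_\alpha |\xi_\alpha|/\min_\alpha|\xi_\alpha|$ and $m=\#\supp(f)$. Both inclusions will be uniform in $x\in K$, because $K$ is compact and the support is finite.

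\textbf{Step 1: rescaled amoebas eventually lie near $\Gamma_\infty$.} Fix $\delta>0$ and let $x\in K$ with $d(x,\Gamma_\infty)\ge\delta$. Then $x$ lies in the interior of a linearity domain of $\max_\alpha\ell_\alpha$, where a unique $\alpha_0$ attains the maximum. Since the differences $\ell_{\alpha_0}-\ell_\beta$ are affine with nonzero slopes $\alpha_0-\beta$, there is $c=c(\delta,K)>0$ with $\ell_{\alpha_0}(x)-\ell_\beta(x)\ge c$ for all $\beta\neq\alpha_0$. Hence, for every $z\in\Log^{-1}(sx)$,
\[
\sum_{\beta\ne\alpha_0}|\xi_\beta t^{\nu(\beta)}z^\beta|\le (m-1)\,C\,e^{-sc}\,|\xi_{\alpha_0}t^{\nu(\alpha_0)}z^{\alpha_0}|,
\]
which is strictly smaller than the $\alpha_0$ term once $s$ is large. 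Thus $f_t(z)\neq0$, so $x\notin s^{-1}\mathscr{A}_{f_t}$, uniformly on $K$.

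The same estimate gives the order of the complement component containing $sx$. Expand $\log|f_t|=\log|\text{dominant monomial}|+\log|1+\text{small}|$ and average over the torus $\Log^{-1}(sx)$: the gradient of $N_{f_t}$ there is $\alpha_0$. So the order map of \cite{FPT-00} takes the value $\alpha_0$ on that component.

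\textbf{Step 2: every point of $\Gamma_\infty\cap K$ is approached.} Let $x\in\Gamma_\infty\cap K$ and $\varepsilon>0$. Since the maximum of the $\ell_\alpha$ is attained at least twice at $x$, the ball $B(x,\varepsilon)$ meets two open linearity regions with distinct dominant vertices $\alpha\neq\beta$, at distance $\ge\delta(\varepsilon)>0$ from $\Gamma_\infty$. Pick points $y$ and $y'$ in these regions. By Step 1, for $s$ large both $y$ and $y'$ lie outside $s^{-1}\mathscr{A}_{f_t}$, and the complement components containing $sy$ and $sy'$ have orders $\alpha\ne\beta$. The order map is locally constant on the complement (it is constant on each component), so these are different components. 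The segment $[sy,sy']$ therefore must meet $\mathscr{A}_{f_t}$, which produces a point of $s^{-1}\mathscr{A}_{f_t}$ within $\varepsilon$ of $x$. A finite $\varepsilon$-net of the compact set $\Gamma_\infty\cap K$ makes this uniform.

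Combining the two steps gives $d_{\mathcal{H}}\bigl((-1/\log t)\mathscr{A}_{f_t}\cap K,\;\Gamma_\infty\cap K\bigr)\to0$. The main obstacle is the second step, which needs the identification of the order of a complement component with the dominant monomial. I would handle it by the direct computation of $\operatorname{grad}N_{f_t}$ on a torus where one term dominates, as sketched at the end of Step 1. If that turns out to be awkward, the fallback is to apply the argument principle (Rouché) to the one-variable restriction of $f_t$ along a coordinate-type circle family joining $y$ and $y'$.
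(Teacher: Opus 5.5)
Your proof is correct. The first half matches the first half of the paper's argument, which appears in the proof of the Spine Convergence Theorem (the paper presents that theorem as its reformulation of this statement). Off $\Gamma_\infty$ a single monomial strictly dominates, so $f_t$ has no zeros on the corresponding torus. You add the uniform gap $c$, which can even be taken as $\delta\min_{\alpha\neq\beta}|\alpha-\beta|$. The paper's pointwise ``for sufficiently small $t$'' leaves this uniformity implicit, but Hausdorff convergence needs it.

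The genuine difference is in the reverse inclusion.
The paper works at a point $y\in\Gamma_\infty$. It asserts that, since two monomials share the leading modulus, suitable phases $\theta$ produce cancellation and hence a zero with logarithm near $-y\log t$.
You never construct a zero. Instead you place $y,y'$ near $x$ in regions with different dominant vertices $\gamma\neq\gamma'$, identify the orders of the complement components containing $sy,sy'$ as $\gamma,\gamma'$, and conclude that the segment $[sy,sy']$ must cross the amoeba.

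Your route has a cost. It uses the Forsberg--Passare--Tsikh fact that the order ($=\nabla N_{f_t}$) is constant on complement components, plus the dominant-monomial computation of the order. In that computation, be precise: the torus mean of $\log|1+h|$ is not zero in general. It is, however, exactly independent of the torus, since it is the real part of $\sum_k\frac{(-1)^{k+1}}{k}$ times the constant term of $h^k$, and these constant terms do not depend on $\Log z$. Alternatively, note that $N_{f_t}$ is within $o(1)$ of an affine function of slope $\alpha_0$ on the large ball $sB$, so an integer slope there must equal $\alpha_0$.

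In return, your argument is rigorous and treats every point of $\Gamma_\infty$ uniformly, including higher-codimension strata where several monomials tie. Phase cancellation by itself gives only near-cancellation. One would still have to balance $|\xi_{\alpha_1}|$ against $|\xi_{\alpha_2}|$ by an $O(1)$ shift of $\Log z$ and absorb the remaining terms with a Rouch\'e-type argument. That is exactly the step your separation argument avoids.

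One small point of phrasing. Steps 1 and 2 prove the $\varepsilon$-neighbourhood form of convergence on $K$. The literal claim $d_{\mathcal{H}}\bigl((-1/\log t)\mathscr{A}_{f_t}\cap K,\;\Gamma_\infty\cap K\bigr)\to 0$ in your last line can fail when $\Gamma_\infty$ meets $\partial K$ and the amoeba approaches only from outside $K$. An example is $f_t=1+3z$ with $\nu\equiv 0$ and $K=[0,1]$, in the general setting of the Spine Convergence Theorem. So state your conclusion in the neighbourhood form.
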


Thus, the tropical hypersurface $\Gamma_{\infty}$ describes the asymptotic structure of the amoebas in the degenerating family.

 

 \section{Spine Convergence Theorem}

The following theorem describes the fundamental relationship between
amoebas of degenerating families of Laurent polynomials and tropical
geometry. When the coefficients of a polynomial depend on a parameter
$t$ in such a way that different monomials acquire different weights,
the geometry of the corresponding complex hypersurface undergoes a
tropical degeneration. In this case the amoeba of the hypersurface,
after an appropriate logarithmic rescaling, develops a limiting
combinatorial structure. This limiting object is the tropical
hypersurface associated with the tropical polynomial obtained by
replacing addition with the maximum operation and taking the weighted
exponents into account. The theorem below, often referred to as the
\emph{Spine Convergence Theorem}, states that the rescaled amoebas
converge to this tropical hypersurface in the Hausdorff topology on
compact subsets of $\mathbb{R}^n$. This theorem is also a reformulation of 
the theorem of Passare-Rullgard  in \cite{PR1-04}, Mikhalkin in  \cite{M1-02}, and Jonsson in \cite{J-15}.
This result provides a precise link
between the analytic geometry of amoebas and the piecewise-linear
geometry of tropical varieties.

\begin{theorem}[Spine Convergence Theorem]
Let
\[
f_t(z)=\sum_{\alpha\in A} a_\alpha\, t^{\nu(\alpha)} z^\alpha ,
\qquad z\in(\mathbb{C}^*)^n,
\]
where $A\subset\mathbb{Z}^n$ is a finite set, $a_\alpha\in\mathbb{C}^*$,
and $\nu:A\to\mathbb{R}$ is a fixed function. Let $\mathscr{A}_{f_t}$ denote
the amoeba of the hypersurface $V_{f_t}\subset(\mathbb{C}^*)^n$, and let
\[
F(x)=\max_{\alpha\in A}\{\langle \alpha,x\rangle-\nu(\alpha)\}
\]
be the tropical polynomial associated with the degeneration. Denote by
$\Gamma_\infty$ the tropical hypersurface defined as the corner locus of $F$.
Then the rescaled amoebas satisfy
\[
(-1/\log t)\,\mathscr{A}_{f_t}\;\longrightarrow\;\Gamma_\infty
\]
in the Hausdorff topology on compact subsets of $\mathbb{R}^n$ as $t\to0$.
\end{theorem}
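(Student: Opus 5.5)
The plan is to prove the two halves of Hausdorff convergence on a fixed compact set $K\subset\mathbb{R}^n$ separately. First, rescaled amoeba points accumulate only on $\Gamma_\infty$. Second, every point of $\Gamma_\infty\cap K$ is approached by rescaled amoeba points. Throughout, write $s=-\log t\to+\infty$ and set $y=x/s$ for $x\in\mathbb{R}^n$ (note $-1/\log t=1/s$). For $z\in\Log^{-1}(x)$ the monomial moduli are
\[
|a_\alpha t^{\nu(\alpha)}z^\alpha| = |a_\alpha|\,e^{s(\langle\alpha,y\rangle-\nu(\alpha))}.
\]
So, after dividing logarithms by $s$, each monomial contributes exactly the affine function $\langle\alpha,y\rangle-\nu(\alpha)$ appearing in $F$, up to an additive error $\log|a_\alpha|/s=O(1/s)$ that is uniform in $y$.

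\emph{Step 1 (rescaled amoeba lies near $\Gamma_\infty$).} Let $y\in K$ with $d(y,\Gamma_\infty)\ge\varepsilon$. Then the maximum in $F(y)$ is attained by a unique $\beta\in A$. By compactness and piecewise linearity of $F$, there is a gap $\delta=\delta(\varepsilon,K)>0$ with
\[
\langle\beta,y\rangle-\nu(\beta)\ge \langle\alpha,y\rangle-\nu(\alpha)+\delta \qquad\text{for all } \alpha\ne\beta .
\]
Hence $|a_\beta t^{\nu(\beta)}z^\beta|$ exceeds $\sum_{\alpha\neq\beta}|a_\alpha t^{\nu(\alpha)}z^\alpha|$ as soon as $(|A|-1)\max_\alpha|a_\alpha|/\min_\alpha|a_\alpha|<e^{s\delta}$. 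This holds for all $t$ below a threshold depending only on $\varepsilon$ and $K$. Therefore $f_t$ has no zero on $\Log^{-1}(sy)$, so $sy\notin\mathscr{A}_{f_t}$, and consequently
\[
(1/s)\mathscr{A}_{f_t}\cap K\subset \{y : d(y,\Gamma_\infty)<\varepsilon\}.
\]

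\emph{Step 2 ($\Gamma_\infty$ lies near the rescaled amoeba).} Let $y_0\in\Gamma_\infty\cap K$. I would reduce to the case where $y_0$ lies in the relative interior of a top-dimensional cell. There the maximum is attained by exactly two exponents $\alpha,\beta$, and every other exponent is smaller by some $\delta>0$; the reduction is justified because such points are dense in $\Gamma_\infty$, and every $\varepsilon$-net argument survives passing to a dense subset. Pick a line $L$ through $y_0$ in a direction $v$ with $\langle\alpha-\beta,v\rangle\ne0$. Restricted to the torus orbit $z=w^{v}\cdot z_0$ (a one-parameter subgroup, after clearing denominators), $f_t$ becomes a Laurent polynomial $g$ in one variable $w$ in which two terms dominate. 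On a circle $|w|=r$ slightly inside, respectively outside, the balancing radius, one of the two dominant monomials strictly dominates all remaining terms. Rouché's theorem then forces the winding numbers of $g$ on the two circles to differ. So $g$ has a zero in the annulus between them, whose $\Log$-image lies within $O(\varepsilon s)$ of $sy_0$. Choosing the radii at $e^{s(\cdot\pm\varepsilon)}$ gives an amoeba point $x$ with $|x/s-y_0|<C\varepsilon$ for all small $t$.

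The main obstacle is Step 2 at non-generic points of $\Gamma_\infty$, and, more subtly, uniformity of the threshold $t_0(\varepsilon)$ over all of $\Gamma_\infty\cap K$. The cleanest route I would try first is to avoid the density argument altogether and run the Rouché argument directly: at any $y_0\in\Gamma_\infty$, take $\alpha\neq\beta$ both maximal and choose $v$ generic so that the linear forms $\langle\gamma,v\rangle$, for the maximizing $\gamma$, are pairwise distinct. On a tiny segment of the line, the maximizing term on the left side and on the right side then have different $\langle\gamma,v\rangle$, i.e.\ different $w$-degrees. A compactness argument over $\Gamma_\infty\cap K$, covering it by finitely many such segments with a uniform gap, then gives a uniform $t_0$. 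Combining Steps 1 and 2 yields Hausdorff convergence on compact sets, recovering the results of Passare--Rullg{\aa}rd and Mikhalkin quoted above.
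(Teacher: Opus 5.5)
Your proof is correct. It shares the paper's two-part skeleton: a unique dominant monomial off $\Gamma_\infty$, and tied dominant monomials forcing zeros on $\Gamma_\infty$. The way you justify each half is different and stronger.

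\begin{itemize}
\item The paper's first half is only pointwise in $y$, whereas Hausdorff convergence needs the version that is uniform on a neighbourhood. Your uniform gap $\delta(\varepsilon,K)$ makes the first inclusion uniform on $K$.
\item The paper's second half only asserts that ``choosing suitable phases'' produces cancellation. That is not an argument. At $x=sy$ with $|a_{\alpha_1}|\neq|a_{\alpha_2}|$, no choice of phases cancels the two leading terms. With several tied terms plus lower-order terms, some topological input is needed.
\item Your restriction of $f_t$ to the monomial curve $w\mapsto w^{v}z_0$ supplies exactly the missing mechanism. Comparing winding numbers on $|w|=e^{\pm s\eta_0}$ via Rouch\'e, the degrees $\langle\gamma_\pm,v\rangle$ of the dominant terms differ. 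Hence there is a zero in the annulus whose rescaled $\Log$-image lies within $\eta_0|v|$ of $y_0$.
\end{itemize}
The paper's sketch is shorter; yours actually proves the statement, with an explicit threshold $t_0(\varepsilon,K)$.

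Two minor corrections.

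First, in the relative interior of a top-dimensional cell the maximum need not be attained by exactly two exponents. For $A=\{0,1,2\}\subset\mathbb{Z}$ and $\nu\equiv 0$, all three exponents tie at $y=0$, because the lifted points are collinear along an edge of $\tau_\infty$. This is harmless. Rouch\'e only uses the two tied exponents that are extremal for $\langle\cdot,v\rangle$. Moreover, your direct route, with $v\in\mathbb{Z}^n$ chosen off the finitely many hyperplanes $\langle\gamma-\gamma',v\rangle=0$, makes the density reduction (and hence any appeal to purity of $\Gamma_\infty$) unnecessary.

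Second, the uniformity of $t_0$ that you flag as the main obstacle is immediate. Take a finite $\varepsilon/2$-net $y_1,\dots,y_N$ of the compact set $\Gamma_\infty\cap K$. Run the pointwise Rouch\'e argument at each $y_j$ with accuracy $\varepsilon/2$, obtaining thresholds $t_j$, and set $t_0=\min_j t_j$. No uniform gap along segments is required.
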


\begin{proof}
Let $z=(z_1,\dots,z_n)\in(\mathbb{C}^*)^n$ and write
\(
z_j=e^{x_j+i\theta_j},\)  \(x=(x_1,\dots,x_n)\in\mathbb{R}^n,\)
and \( \theta\in[0,2\pi]^n .
\)
Then
\(
z^\alpha=e^{\langle\alpha,x\rangle+i\langle\alpha,\theta\rangle}.
\)
Substituting this into the definition of $f_t$ yields
\[
f_t(z)=\sum_{\alpha\in A}
a_\alpha
\exp\!\big(
\langle\alpha,x\rangle+\nu(\alpha)\log t
+i\langle\alpha,\theta\rangle
\big).
\]
Introduce the rescaled logarithmic coordinate
\(
y=\dfrac{-1}{\log t}\,x .
\)
Then $x=-y\log t$, and therefore
\[
\langle\alpha,x\rangle
=
-\langle\alpha,y\rangle\log t .
\]
Substituting into the expression for $f_t(z)$ gives
\[
f_t(z)
=
\sum_{\alpha\in A}
a_\alpha
\exp\!\big(
-(\langle\alpha,y\rangle-\nu(\alpha))\log t
+i\langle\alpha,\theta\rangle
\big).
\]
Since $\log t<0$ for $t\in(0,1)$, the magnitudes of the monomials are
controlled by the quantities
\[
-(\langle\alpha,y\rangle-\nu(\alpha))\log t .
\]
Equivalently,
\[
|f_t(z)|
=
\left|
\sum_{\alpha\in A}
a_\alpha
\exp\!\big(
-(\langle\alpha,y\rangle-\nu(\alpha))\log t
\big)
e^{i\langle\alpha,\theta\rangle}
\right|.
\]

Let
\[
F(y)=\max_{\alpha\in A}\{\langle\alpha,y\rangle-\nu(\alpha)\}.
\]
Then
\[
\exp\!\big(-F(y)\log t\big)
\]
represents the dominant magnitude among the monomials in $f_t(z)$.
Factor this term out of the sum. One obtains
\[
f_t(z)
=
\exp\!\big(-F(y)\log t\big)
\sum_{\alpha\in A}
a_\alpha
\exp\!\Big(
-(\langle\alpha,y\rangle-\nu(\alpha)-F(y))\log t
+i\langle\alpha,\theta\rangle
\Big).
\]

By definition of $F(y)$ one has
\[
\langle\alpha,y\rangle-\nu(\alpha)-F(y)\le0
\]
for all $\alpha\in A$. Hence every exponential factor in the sum
remains bounded as $t\to0$.

If $y$ is not contained in the tropical hypersurface $\Gamma_\infty$,
then there exists a unique exponent $\alpha_0$ such that
\[
F(y)=\langle\alpha_0,y\rangle-\nu(\alpha_0)
\]
and
\[
\langle\alpha_0,y\rangle-\nu(\alpha_0)
>
\langle\alpha,y\rangle-\nu(\alpha)
\qquad
\text{for all }\alpha\ne\alpha_0 .
\]
In this case the term corresponding to $\alpha_0$ dominates all other
terms in the expression for $f_t(z)$ as $t\to0$. Consequently the sum
cannot vanish for sufficiently small $t$. This implies that
$y\notin(-1/\log t)\,\mathscr{A}_{f_t}$ for all sufficiently small $t$.

Conversely, suppose that $y$ belongs to the tropical hypersurface
$\Gamma_\infty$. By definition there exist at least two indices
$\alpha_1,\alpha_2\in A$ such that
\[
\langle\alpha_1,y\rangle-\nu(\alpha_1)
=
\langle\alpha_2,y\rangle-\nu(\alpha_2)
=
F(y).
\]
In this situation the corresponding monomials in $f_t(z)$ have the
same leading magnitude. By choosing suitable phases $\theta$ one can
produce cancellations between these dominant terms. Therefore there
exist points $z$ with logarithmic coordinate close to $x=-y\log t$
such that $f_t(z)=0$. This implies that $y$ belongs to the limit set
of the rescaled amoebas.
Combining these two observations shows that every accumulation point
of the rescaled amoebas lies in $\Gamma_\infty$, and every point of
$\Gamma_\infty$ can be approximated by points of
$(-1/\log t)\,\mathscr{A}_{f_t}$.
Therefore
\[
(-1/\log t)\,\mathscr{A}_{f_t}
\longrightarrow
\Gamma_\infty
\]
in the Hausdorff topology on compact subsets of $\mathbb{R}^n$ as
$t\to0$.
\end{proof}
 
 

\section{Stability of Ronkin Linearity Domains in Maximally Sparse Degenerations}

In the study of degenerating families of Laurent polynomials, an important
question concerns the behavior of the Ronkin function and the structure of
its linearity domains. These domains encode the geometry of the complement
components of the amoeba and are closely related to the combinatorics of
the Newton polytope. When the polynomial is maximally sparse, meaning that
its support consists exactly of the vertices of its Newton polytope, one
expects the convex-geometric structure of the Ronkin function to remain
stable under sufficiently small deformations of the coefficients. In
particular, one may ask whether new linearity domains corresponding to
interior lattice points of the Newton polytope can appear during the
degeneration. The following proposition shows that this phenomenon does not
occur: for sufficiently small values of the parameter, the slopes of the
linearity domains of the Ronkin function remain precisely the vertices of
the Newton polytope, and no additional slopes corresponding to interior
lattice points can arise.
 
\vspace{0.1cm}


\begin{theorem}\label{prop1}
Let
\[
f_t(z)=\sum_{\alpha\in\mathrm{Vert}(\Delta)}\xi_\alpha t^{\nu(\alpha)}z^\alpha,
\qquad t\in(0,1/e],
\]
be a family of Laurent polynomials whose support consists exactly of the
vertices of the Newton polytope $\Delta\subset\mathbb{R}^n$.
Let $N_{f_t}$ be the Ronkin function of $f_t$. Then for sufficiently
small $t>0$ the Ronkin function has linearity domains only with slopes
given by the vertices of $\Delta$. In particular no new linearity domains
with slopes corresponding to interior lattice points of $\Delta$
can appear for sufficiently small $t$.
\end{theorem}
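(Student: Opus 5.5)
The plan is to argue by contradiction and localize. Suppose there is a sequence $t_k\to 0$ and, for each $k$, a linearity domain $E_k$ of $N_{f_{t_k}}$ whose slope (equivalently, the order $\ord(E_k)$, by the Forsberg--Passare--Tsikh theory recalled in the preliminaries) is a lattice point $\beta\in\Delta\cap\mathbb{Z}^n$ that is not a vertex. Since there are finitely many lattice points, after passing to a subsequence we may fix $\beta$.

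\textbf{Step 1: localize $E_k$.} I would use the standard two-sided estimate
\[
\max_{\alpha}\{\log|\xi_\alpha|+\nu(\alpha)\log t+\langle\alpha,x\rangle\}\le N_{f_t}(x)\le \log|\mathrm{Vert}(\Delta)|+\max_\alpha\{\cdots\}.
\]
After rescaling by $-1/\log t$, this gives uniform convergence of $N_{f_t}(-x\log t)/(-\log t)$ to the tropical polynomial $F$ of the Spine Convergence Theorem, with error $O(1/|\log t|)$. On the rescaled domain $E_k/(-\log t_k)$ the rescaled Ronkin function is affine with gradient $\beta$, while $F$ has only vertex gradients. A convex function squeezed within $O(1/|\log t|)$ of $F$ can have an affine piece of slope $\beta$ only near the cell of $\Gamma_\infty$ dual to the cell $\sigma$ of $\tau_\infty$ that contains $\beta$ in its relative interior. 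The rescaled $E_k$ therefore accumulates on that cell of $\Gamma_\infty$, and in unrescaled coordinates it stays at bounded distance from a point $p_k$ on the corresponding face.

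\textbf{Step 2: reduce to a fixed polynomial.} Translate by $p_k$, that is, substitute $z\mapsto e^{p_k}z$ and divide by the dominant monomial. The polynomial $f_{t_k}$ then becomes $g_k$. Its coefficients on $\mathrm{Vert}(\sigma)$ stay bounded away from $0$ and $\infty$, and its coefficients on the remaining vertices tend to $0$ uniformly on compact sets of $x$. Passing to a subsequence, $g_k\to g$, where $g$ is supported exactly on $\mathrm{Vert}(\sigma)$. Amoebas converge on compact sets by the same dominance argument as in the Spine Convergence Theorem. The translated domains $E_k-p_k$ have order $\beta$ and meet a fixed compact set. The aim is to show that some point $x_0$ lies in the complement of $\mathscr{A}_g$ with order $\beta$, via lower semicontinuity of the complement: a limit point $x_0$ of the $E_k - p_k$ at which $\log|g|$ stays bounded away from $-\infty$.

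\textbf{Step 3: the cell case.} When $\nu$ is generic, $\sigma$ is a simplex. For a polynomial supported on the vertices of a simplex, a monomial change of variables realizes $V_g$ as a finite cover of an affine hyperplane. Hence $\mathscr{A}_g$ is the image of a hyperplane amoeba under an invertible linear map. That complement has exactly one component per vertex, so order $\beta$ is impossible, which is the desired contradiction.

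\textbf{Main obstacle.} I expect two hard points. First, the limiting argument in Step 2: a priori the $E_k$ could shrink to a point or escape, so I would need a uniform lower bound on their size. I would try to get this from the Ronkin measure, using the fact that the positive $\beta$-slope region forces a jump in $\nabla N$ of bounded-below size, together with the fact that $\nabla N_{g_k}$ converges to $\nabla N_g$ in $L^1_{\mathrm{loc}}$. Second, the non-generic case, where $\sigma$ is a non-simplicial cell whose vertices are in general position with an interior lattice point. There the limit $g$ is again maximally sparse but with smaller, non-simplex support. I would attempt to handle it by perturbing $\nu$ to a generic height function, together with the openness of complement components under coefficient perturbation. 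However, this is exactly where the statement reduces to itself, so that step is the genuine difficulty.
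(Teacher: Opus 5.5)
Your Steps 1 and 3 are sound. The proposal nevertheless has two genuine gaps, and the first is not a technicality.

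\textbf{The non-generic case is the entire difficulty.} Suppose $\nu$ is affine on $\mathrm{Vert}(\Delta)$, say $\nu(\alpha)=\langle u,\alpha\rangle+c$. Then $f_t(z)=t^{c}f_1(t^{u_1}z_1,\dots,t^{u_n}z_n)$, so $\mathscr{A}_{f_t}=\mathscr{A}_{f_1}-(\log t)\,u$ is a translate of one fixed amoeba. ``Sufficiently small $t$'' then buys nothing: the statement is exactly solidity of $f_1=\sum_\alpha\xi_\alpha z^\alpha$. This occurs for the Passare--Rullg{\aa}rd weights $\nu(\alpha)=-\log|a_\alpha|$ whenever, e.g., all $|a_\alpha|=1$, and then $f_t\equiv f$.

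In this case $\tau_\infty=\{\Delta\}$. For interior $\beta$ your Steps 1--2 simply return $g=f_1$ up to a constant, and Step 3 applies only if $\Delta$ is a simplex. Perturbing $\nu$ does not help. Openness lets a hypothetical order-$\beta$ component pass from $f_t$ to $f'_t=\sum_\alpha\xi_\alpha t^{\nu'(\alpha)}z^\alpha$ only if the ratios $t^{\nu'(\alpha)-\nu(\alpha)}$ are close to $1$, i.e.\ only if $|\nu'-\nu|\,|\log t|$ is small. Your localization for a generic $\nu'$, by contrast, needs the new faces of $\Gamma'_\infty$ to be large, and their unrescaled size is $\asymp|\nu'-\nu|\,|\log t|$. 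The two requirements conflict, so the reduction is circular, as you suspected. You need a genuinely new argument for maximally sparse polynomials with non-simplex Newton polytope, and that is the real content of the statement.

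\textbf{Step 2 fails even for generic $\nu$.} Here openness runs the wrong way. Components of $\mathbb{R}^n\setminus\mathscr{A}_g$ persist for $g_k$, but components of $g_k$ need not survive in the limit. The $E_k-p_k$ may shrink to a point $x_0\in\mathscr{A}_g$. Locally uniform convergence $N_{g_k}\to N_g$ only gives $\beta\in\partial N_g(x_0)$, and such points exist in $\mathscr{A}_g$: for instance, minimizers of $N_g-\langle\beta,\cdot\rangle$ when $\beta$ is interior to the Newton polytope of $g$. So nothing is contradicted.

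The Ronkin-measure idea does not rescue this. $\nabla N_{g_k}\equiv\beta$ on $E_k-p_k$ forces no jump. $L^1_{\mathrm{loc}}$ convergence of gradients, like weak convergence of the Monge--Amp\`ere measures, cannot detect open sets of vanishing volume. What is missing is a positive stability statement for the limit amoeba. For example, you would need every point of $\mathscr{A}_g$ with $\beta\in\partial N_g$ to be the image of a regular point of $\Log|_{V_g}$, so that a neighbourhood stays in $\mathscr{A}_{g_k}$ for large $k$, together with separate control near $\partial\mathscr{A}_g$. A minor point: if $p_k$ stays near a smaller face of $\Gamma_\infty$, then $g$ is supported on some $S$ with $\mathrm{Vert}(\sigma)\subseteq S\subseteq\mathrm{Vert}(\sigma')$ for a larger cell $\sigma'\supseteq\sigma$, not on $\mathrm{Vert}(\sigma)$ exactly. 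This is harmless for generic $\nu$.

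\textbf{Comparison with the paper.} The paper does not localize. It bounds $|N_{f_t}-F_t|\le C$ and then compares asymptotic slopes along a ray $x_0+sv$ with $x_0\in E$, where $v$ exposes a vertex $\alpha_1$ in a convex representation of $p=\ord(E)$. That route cannot fill your gaps.

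Write $N_{f_t}=\langle p,\cdot\rangle+b$ on $E$. The identity $N_{f_t}(x_0+sv)=\langle p,x_0+sv\rangle+b$ for all $s\ge 0$ presumes $v$ is a recession direction of $E$; the contradiction derived there only shows that it is not. Off $E$, convexity gives only $N_{f_t}\ge\langle p,\cdot\rangle+b$, i.e.\ $\langle\alpha_1,v\rangle\ge\langle p,v\rangle$. Moreover, $|N_f-\max_{\alpha\in\mathrm{Vert}(\Delta_f)}(\log|a_\alpha|+\langle\alpha,\cdot\rangle)|$ is bounded for every Laurent polynomial, so that argument never uses sparsity. It would equally rule out the bounded order-$(0,0)$ component of $z+w+z^{-1}w^{-1}+a$ with $|a|>3$.

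Your localization correctly isolates where sparsity must enter, namely Step 3. As it stands, however, the proposal proves the statement in neither the generic nor the non-generic case.
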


\begin{proof}
Let
\(
A=\mathrm{Vert}(\Delta)
\)
and write the Laurent polynomial in the form
\[
f_t(z)=\sum_{\alpha\in A}\xi_\alpha t^{\nu(\alpha)}z^\alpha .
\]
For $x\in\mathbb{R}^n$ and $\theta\in[0,2\pi]^n$ write
\(
z_j=e^{x_j+i\theta_j},\) with
\( j=1,\dots,n .
\)
Then
\(
z^\alpha=e^{\langle\alpha,x\rangle}e^{i\langle\alpha,\theta\rangle},
\)
and therefore
\[
f_t(e^{x+i\theta})
=
\sum_{\alpha\in A}
\xi_\alpha
\exp\!\big(\langle\alpha,x\rangle+\nu(\alpha)\log t+i\langle\alpha,\theta\rangle\big).
\]
Define for every $\alpha\in A$
\(
\ell_{\alpha,t}(x)=\langle\alpha,x\rangle+\nu(\alpha)\log t,
\)
and define the convex function
\[
F_t(x)=\max_{\alpha\in A}\ell_{\alpha,t}(x).
\]
The function $F_t$ is convex and piecewise affine on $\mathbb{R}^n$,
being the maximum of finitely many affine functions.
Let $N_{f_t}$ denote the Ronkin function of $f_t$, defined by
\[
N_{f_t}(x)=\frac{1}{(2\pi)^n}\int_{[0,2\pi]^n}
\log|f_t(e^{x+i\theta})|\,d\theta .
\]
We first show that the difference $N_{f_t}(x)-F_t(x)$ remains bounded
uniformly in $x$ for sufficiently small $t$.
From the expression of $f_t(e^{x+i\theta})$ we obtain
\[
|f_t(e^{x+i\theta})|
\le
\sum_{\alpha\in A}
|\xi_\alpha|
\exp(\ell_{\alpha,t}(x)).
\]
Since $\ell_{\alpha,t}(x)\le F_t(x)$ for all $\alpha$, we obtain
\[
|f_t(e^{x+i\theta})|
\le
\Big(\sum_{\alpha\in A}|\xi_\alpha|\Big)e^{F_t(x)} .
\]
Taking logarithms and integrating with respect to $\theta$ yields
\[
N_{f_t}(x)\le F_t(x)+\log\!\Big(\sum_{\alpha\in A}|\xi_\alpha|\Big).
\]
Hence there exists a constant $C_2$ such that
\(
N_{f_t}(x)\le F_t(x)+C_2\)
for all \(x\in\mathbb{R}^n.
\)
To obtain a lower bound we choose $\alpha_0\in A$ such that
\(
F_t(x)=\ell_{\alpha_0,t}(x).
\)
Then
\[
f_t(e^{x+i\theta})
=
e^{F_t(x)}
\left(
\xi_{\alpha_0}e^{i\langle\alpha_0,\theta\rangle}
+
\sum_{\alpha\neq\alpha_0}
\xi_\alpha
e^{\ell_{\alpha,t}(x)-F_t(x)}
e^{i\langle\alpha,\theta\rangle}
\right).
\]
Set
\[
g_{x,t}(\theta)
=
\xi_{\alpha_0}e^{i\langle\alpha_0,\theta\rangle}
+
\sum_{\alpha\neq\alpha_0}
\xi_\alpha
e^{\ell_{\alpha,t}(x)-F_t(x)}
e^{i\langle\alpha,\theta\rangle}.
\]
Then
\(
f_t(e^{x+i\theta})=e^{F_t(x)}g_{x,t}(\theta).
\)
Taking logarithms gives
\(
\log|f_t(e^{x+i\theta})|
=
F_t(x)+\log|g_{x,t}(\theta)|.
\)
Integrating over $\theta$ yields
\[
N_{f_t}(x)
=
F_t(x)
+
\frac{1}{(2\pi)^n}\int_{[0,2\pi]^n}\log|g_{x,t}(\theta)|\,d\theta .
\]
The function $g_{x,t}$ is a trigonometric polynomial whose coefficients
belong to a bounded set because
\[
0\le e^{\ell_{\alpha,t}(x)-F_t(x)}\le 1 .
\]
Since the set $A$ is finite and the coefficients $\xi_\alpha$ are fixed,
by Lemme \ref{lem7} (standard estimates for logarithmic integrals of trigonometric polynomials),
we obtain that
\[
\frac{1}{(2\pi)^n}\int_{[0,2\pi]^n}\log|g_{x,t}(\theta)|\,d\theta
\ge -C_1
\]
for some constant $C_1$ independent of $x$ and $t$.
Consequently
\[
F_t(x)-C_1\le N_{f_t}(x)\le F_t(x)+C_2 .
\]
Thus
\(
|N_{f_t}(x)-F_t(x)|\le C
\)
for some constant $C$ independent of $x$ and for sufficiently small $t$.

We now use this estimate to control the slopes of the linearity domains
of the Ronkin function. Let $E$ be a connected component of the complement
of the amoeba of $f_t$. On $E$ the Ronkin function is affine, hence there
exist $p\in\mathbb{R}^n$ and $\beta\in\mathbb{R}$ such that
\(
N_{f_t}(x)=\langle p,x\rangle+\beta\)
for all \( x\in E.
\)
A fundamental property of the Ronkin function implies that
\(
p\in\Delta\cap\mathbb{Z}^n .
\)
Suppose that $p$ is not a vertex of $\Delta$. Then $p$ can be written as a
convex combination of vertices
\[
p=\sum_{i=1}^k\lambda_i\alpha_i,
\qquad
\alpha_i\in A,
\qquad
\lambda_i>0,
\qquad
\sum_{i=1}^k\lambda_i=1.
\]
Choose a vector $v\in\mathbb{R}^n$ such that
\(
\langle\alpha_1,v\rangle>
\langle\alpha_i,v\rangle\)
for \(i\ge2 .
\)
Such a vector exists because $\alpha_1$ is a vertex of the polytope.
Fix $x_0\in E$ and consider the ray
\(
x_s=x_0+s v,\) with 
\( s\ge 0,
\)
where $v\in\mathbb{R}^n$ is a fixed vector chosen so that
\[
\langle\alpha_1,v\rangle>
\langle\alpha,v\rangle
\qquad
\text{for all }\alpha\in\mathrm{Vert}(\Delta),\ \alpha\neq\alpha_1 .
\]
Since $N_{f_t}$ is affine on $E$, we obtain
\(
N_{f_t}(x_s)
=
\langle p,x_s\rangle+\beta
=
\langle p,x_0\rangle+\beta
+
s\langle p,v\rangle .
\)
Dividing by $s$ yields
\[
\frac{N_{f_t}(x_s)}{s}
=
\frac{\langle p,x_0\rangle+\beta}{s}
+
\langle p,v\rangle .
\]
Taking the limit as $s\to\infty$ gives
\[
\lim_{s\to\infty}\frac{N_{f_t}(x_s)}{s}
=
\langle p,v\rangle .
\]
On the other hand we consider the function
\[
F_t(x)=\max_{\alpha\in\mathrm{Vert}(\Delta)}
\{\langle\alpha,x\rangle+\nu(\alpha)\log t\}.
\]
Evaluating $F_t$ along the ray $x_s$ gives
\[
F_t(x_s)
=
\max_{\alpha\in\mathrm{Vert}(\Delta)}
\{\langle\alpha,x_0\rangle+\nu(\alpha)\log t+s\langle\alpha,v\rangle\}.
\]
Since $\langle\alpha_1,v\rangle$ is strictly larger than
$\langle\alpha,v\rangle$ for all other vertices $\alpha$, there exists
$s_0>0$ such that for all $s\ge s_0$ the maximum in the previous
expression is attained at $\alpha_1$ (see Lemma \ref{lem5}). Hence for $s\ge s_0$ we have
\(
F_t(x_s)
=
\langle\alpha_1,x_0\rangle+\nu(\alpha_1)\log t
+
s\langle\alpha_1,v\rangle .
\)
Dividing by $s$ gives
\[
\frac{F_t(x_s)}{s}
=
\frac{\langle\alpha_1,x_0\rangle}{s}
+
\frac{\nu(\alpha_1)\log t}{s}
+
\langle\alpha_1,v\rangle .
\]
Since
\[
\frac{\langle\alpha_1,x_0\rangle}{s}\to0,
\qquad
\frac{\log t}{s}\to0
\quad\text{as }s\to\infty,
\]
it follows that
\[
\lim_{s\to\infty}\frac{F_t(x_s)}{s}
=
\langle\alpha_1,v\rangle .
\]
We now use the fact that the difference between the Ronkin function and
$F_t$ remains uniformly bounded. More precisely, there exists a constant
$C>0$ independent of $x$ such that
\[
|N_{f_t}(x)-F_t(x)|\le C
\qquad\text{for all }x\in\mathbb{R}^n .
\]
Applying this estimate to the points $x_s$ yields
\(
|N_{f_t}(x_s)-F_t(x_s)|\le C .
\)
Dividing by $s$ gives
\[
\left|
\frac{N_{f_t}(x_s)}{s}
-
\frac{F_t(x_s)}{s}
\right|
\le
\frac{C}{s}.
\]
Letting $s\to\infty$ we obtain
\[
\lim_{s\to\infty}\frac{N_{f_t}(x_s)}{s}
=
\lim_{s\to\infty}\frac{F_t(x_s)}{s}.
\]
Combining the two limits computed above yields
\[
\langle p,v\rangle
=
\langle\alpha_1,v\rangle .
\]
However
\[
\langle p,v\rangle
=
\sum_{i=1}^k\lambda_i\langle\alpha_i,v\rangle
<
\langle\alpha_1,v\rangle ,
\]
which is a contradiction.
Therefore $p$ must belong to $\mathrm{Vert}(\Delta)$. Hence every
linearity domain of the Ronkin function has slope equal to a vertex
of the Newton polytope. In particular no new linearity domains with
slopes corresponding to interior lattice points of $\Delta$ can appear
for sufficiently small $t$.
\end{proof}


\subsection{Uniform Lower Bounds for Logarithmic Integrals of Trigonometric Polynomials}

In the comparison between the Ronkin function and the convex function
$F_t$, it is convenient to factor out the dominant exponential term from
the Laurent polynomial evaluated on the logarithmic torus. This leads to
a trigonometric polynomial whose coefficients depend on the parameters
$x$ and $t$. Although the coefficients vary with these parameters, they
remain confined to a compact set determined only by the original
coefficients of the Laurent polynomial. The following lemma provides a
uniform lower bound for the logarithmic average of the absolute value of
this trigonometric polynomial. This estimate plays a crucial role in
showing that the Ronkin function differs from $F_t$ by a bounded amount 
independently of $x$ and $t$.

\vspace{0.1cm} 
Let
\[
g_{x,t}(\theta)
=
\xi_{\alpha_0}e^{i\langle\alpha_0,\theta\rangle}
+
\sum_{\alpha\neq\alpha_0}
\xi_\alpha
e^{\ell_{\alpha,t}(x)-F_t(x)}
e^{i\langle\alpha,\theta\rangle},
\qquad 
\theta\in[0,2\pi]^n,
\]
where
\[
\ell_{\alpha,t}(x)=\langle\alpha,x\rangle+\nu(\alpha)\log t,
\qquad 
F_t(x)=\max_{\beta\in A}\ell_{\beta,t}(x),
\]
and $A=\mathrm{Vert}(\Delta)$.  

\begin{lemma}\label{lem7}
With the above notation, we prove that there exists a constant $C_1>0$ depending only on the finite
set $A$ and on the coefficients $\{\xi_\alpha\}_{\alpha\in A}$ such that
\[
\frac{1}{(2\pi)^n}
\int_{[0,2\pi]^n}
\log|g_{x,t}(\theta)|\,d\theta
\ge -C_1
\]
for all $x\in\mathbb{R}^n$ and all $t\in(0,1/e]$.
\end{lemma}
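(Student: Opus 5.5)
The plan is to bound the integral below by $\log|\xi_{\alpha_0}|$ and then take the worst case over the finitely many possible choices of $\alpha_0$. The tool is the classical lower bound for Mahler measures: for a Laurent polynomial, the logarithmic mean of $|g|$ over the torus is at least $\log$ of the modulus of its coefficient at any vertex of its Newton polytope. I would prove this directly by reducing to one variable and applying Jensen's formula, rather than citing it.

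\emph{Step 1: the distinguished term is a strict extreme term.} Fix $x$ and $t$, and let $\alpha_0\in A$ be the index chosen in $g_{x,t}$. The support of $g_{x,t}$ is contained in $A=\mathrm{Vert}(\Delta)$, and $\alpha_0$ is a vertex of $\Delta$. Hence there is a vector in the interior of the normal cone of $\Delta$ at $\alpha_0$. Since $A\subset\mathbb{Z}^n$ is finite and that open cone contains rational, hence integer, vectors, I can choose $w\in\mathbb{Z}^n$ with
\[
\langle\alpha_0,w\rangle>\langle\alpha,w\rangle\qquad\text{for all }\alpha\in A\setminus\{\alpha_0\}.
\]
This $w$ depends only on $\alpha_0$ and $A$, not on $x$ or $t$.

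\emph{Step 2: fibre along the direction $w$ and apply Jensen.} By translation invariance of Haar measure on the torus,
\[
\frac{1}{(2\pi)^n}\int\log|g_{x,t}(\theta)|\,d\theta=\frac{1}{(2\pi)^{n+1}}\int_{[0,2\pi]^n}\int_0^{2\pi}\log|g_{x,t}(\theta+\varphi w)|\,d\varphi\,d\theta .
\]
For fixed $\theta$, write $u=e^{i\varphi}$. Then $\varphi\mapsto g_{x,t}(\theta+\varphi w)$ equals $\sum_{\alpha}c_\alpha(\theta)\,u^{\langle\alpha,w\rangle}$, a one-variable Laurent polynomial in $u$. Its top exponent $\langle\alpha_0,w\rangle$ is attained only by $\alpha_0$, with coefficient $\xi_{\alpha_0}e^{i\langle\alpha_0,\theta\rangle}$, because the factor $e^{\ell_{\alpha_0,t}(x)-F_t(x)}$ equals $1$ for $\alpha_0$. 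In particular this polynomial is not identically zero. Multiply by a power of $u$, which does not change the modulus on $|u|=1$, to get an honest polynomial. Jensen's formula then gives
\[
\frac{1}{2\pi}\int_0^{2\pi}\log|p(e^{i\varphi})|\,d\varphi=\log|\text{leading coeff}|+\sum_j\log\max(1,|r_j|)\ge\log|\xi_{\alpha_0}| .
\]
Integrating in $\theta$ yields
\[
\frac{1}{(2\pi)^n}\int\log|g_{x,t}|\,d\theta\ge\log|\xi_{\alpha_0}| .
\]
Fubini is legitimate because $\log|g_{x,t}|$ is bounded above and locally integrable, since $g_{x,t}\not\equiv0$.

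\emph{Step 3: uniformity.} The bound $\log|\xi_{\alpha_0}|$ no longer involves $x$ or $t$. The only dependence on the parameters is through which $\alpha_0\in A$ is selected, and there are finitely many choices. So one can take
\[
C_1=1+\max_{\alpha\in A}\bigl|\log|\xi_\alpha|\bigr| ,
\]
which is positive and depends only on $A$ and the $\xi_\alpha$.

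The main obstacle is Step 2. A naive compactness argument fails because the log-mean is only upper semicontinuous in the coefficients, and the coefficient set $[0,1]$ for each $\alpha\ne\alpha_0$ allows degenerate limits. The extremality of $\alpha_0$, which is exactly where maximal sparsity enters, is what makes Jensen's inequality applicable uniformly. So I would take care to justify that $w$ can be taken integral, so that the restriction is a genuine Laurent polynomial in $e^{i\varphi}$, and that the top coefficient is unique and nonzero.
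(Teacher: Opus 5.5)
Your proof is correct, but it follows a different route from the paper. The paper argues by compactness. For fixed $\alpha_0$ the coefficient vector $c(x,t)$ lies in the compact polydisc $K$. The paper asserts that $\Phi(c)=\frac{1}{(2\pi)^n}\int\log|G_c|\,d\theta$ is continuous on $K$ ``by dominated convergence'', and takes $-C_1=\min_K\Phi$.

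You instead prove the explicit estimate $\frac{1}{(2\pi)^n}\int\log|g_{x,t}|\,d\theta\ge\log|\xi_{\alpha_0}|$. You do this by fibering the torus along an integral vector $w$ that strictly exposes the vertex $\alpha_0$, and applying Jensen's formula on each circle (really only the sub-mean-value inequality for $\log|u^{d}p(1/u)|$ at $u=0$). This gives you an explicit constant $C_1=1+\max_{\alpha}|\log|\xi_\alpha||$ and avoids any continuity question. It also handles the dependence of $\alpha_0$ on $(x,t)$ by a finite maximum, a point the paper leaves implicit when it fixes $\alpha_0$ in the definition of $K$.

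The price is that you genuinely use that $\alpha_0$ is a vertex of the support of $g_{x,t}$, which holds here only because $A=\mathrm{Vert}(\Delta)$. The compactness argument, once continuity of $\Phi$ is established, does not need this; it only needs the coefficient at $\alpha_0$ to stay fixed and nonzero. As a side benefit, the same Jensen step applied directly to $f_t(e^{x+i\theta})$ gives the classical vertex bound $N_{f_t}(x)\ge\ell_{\alpha,t}(x)+\log|\xi_\alpha|$ for every $\alpha\in A$.

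Your side remark that the compactness argument fails overstates things. The paper's appeal to dominated convergence does exhibit no integrable minorant for $\log|G_c|$ uniform in $c$, so only upper semicontinuity of $\Phi$ is immediate. But $\Phi$ is in fact continuous on $K$. Fibering along your $w$ writes $\Phi(c)$ as the $\theta$-average of logarithmic Mahler measures of one-variable polynomials of fixed degree, whose leading coefficient $\xi_{\alpha_0}e^{i\langle\alpha_0,\theta\rangle}$ never vanishes. These depend continuously on $(c,\theta)$ and are bounded on $K\times[0,2\pi]^n$, so dominated convergence does apply to the fibered integral. That repair is essentially your argument, though, and once you have it, your explicit lower bound makes the compactness step unnecessary.
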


\begin{proof}
First observe that for every $\alpha\neq\alpha_0$ one has
\[
\ell_{\alpha,t}(x)-F_t(x)\le 0.
\]
Hence
\[
0\le e^{\ell_{\alpha,t}(x)-F_t(x)}\le 1 .
\]
Define
\[
c_\alpha(x,t)=\xi_\alpha e^{\ell_{\alpha,t}(x)-F_t(x)}
\qquad (\alpha\neq\alpha_0).
\]
Then
\[
|c_\alpha(x,t)|\le |\xi_\alpha|
\]
for all $x$ and $t$. Therefore the trigonometric polynomial
\[
g_{x,t}(\theta)
=
\xi_{\alpha_0}e^{i\langle\alpha_0,\theta\rangle}
+
\sum_{\alpha\neq\alpha_0}
c_\alpha(x,t)e^{i\langle\alpha,\theta\rangle}
\]
has coefficients lying in the compact set
\[
K=\left\{(c_\alpha)_{\alpha\in A\setminus\{\alpha_0\}}
\in\mathbb{C}^{|A|-1}\; ;\;
|c_\alpha|\le |\xi_\alpha|
\right\}.
\]

Introduce the family of trigonometric polynomials
\[
G_c(\theta)
=
\xi_{\alpha_0}e^{i\langle\alpha_0,\theta\rangle}
+
\sum_{\alpha\neq\alpha_0}
c_\alpha e^{i\langle\alpha,\theta\rangle},
\qquad c\in K.
\]
Then
\[
g_{x,t}(\theta)=G_{c(x,t)}(\theta)
\]
for some $c(x,t)\in K$.

Consider the function
\[
\Phi(c)
=
\frac{1}{(2\pi)^n}
\int_{[0,2\pi]^n}
\log|G_c(\theta)|\,d\theta .
\]
The function $\Phi$ is well defined for every $c\in K$.  
Indeed $G_c$ is not identically zero because the coefficient of the
monomial $e^{i\langle\alpha_0,\theta\rangle}$ is always $\xi_{\alpha_0}\neq0$.
Hence $G_c$ is a non-zero trigonometric polynomial and
$\log|G_c(\theta)|$ is integrable over $[0,2\pi]^n$.

We now show that $\Phi$ is continuous on $K$.  
For each $\theta$ the function $c\mapsto G_c(\theta)$ is linear, hence
continuous. Therefore $c\mapsto |G_c(\theta)|$ is continuous, and so
is $c\mapsto\log|G_c(\theta)|$ on the set where $G_c(\theta)\neq0$.
Since $G_c$ is not identically zero, the set of $\theta$ for which
$G_c(\theta)=0$ has measure zero. Using dominated convergence one
obtains that $\Phi$ is continuous on $K$.

Since $K$ is compact and $\Phi$ is continuous, the function $\Phi$
attains its minimum on $K$. Hence there exists a constant $C_1>0$
such that
\[
\Phi(c)\ge -C_1
\qquad\text{for all }c\in K.
\]

Since $c(x,t)\in K$ for all $x$ and $t$, it follows that
\[
\frac{1}{(2\pi)^n}
\int_{[0,2\pi]^n}
\log|g_{x,t}(\theta)|\,d\theta
=
\Phi(c(x,t))
\ge -C_1 .
\]

Thus the constant $C_1$ depends only on the finite set $A$ and on the
coefficients $\{\xi_\alpha\}_{\alpha\in A}$, and is therefore
independent of $x$ and $t$.
\end{proof}


\medskip


Let $\Delta\subset\mathbb{R}^n$ be a polytope and let 
$\mathrm{Vert}(\Delta)$ denote its set of vertices. Fix a point 
$x_0\in\mathbb{R}^n$ and a vector $v\in\mathbb{R}^n$, and consider the ray
\[
x_s=x_0+s v,
\qquad s\ge 0 .
\]
Let $\nu:\mathrm{Vert}(\Delta)\to\mathbb{R}$ be a function and define
\[
F_t(x)=
\max_{\alpha\in\mathrm{Vert}(\Delta)}
\{\langle\alpha,x\rangle+\nu(\alpha)\log t\}.
\]
Evaluating $F_t$ along the ray $x_s$ gives
\[
F_t(x_s)
=
\max_{\alpha\in\mathrm{Vert}(\Delta)}
\{\langle\alpha,x_0\rangle+\nu(\alpha)\log t+s\langle\alpha,v\rangle\}.
\]
For each $\alpha\in\mathrm{Vert}(\Delta)$ define the function
\(
g_\alpha(s)
=
\langle\alpha,x_0\rangle+\nu(\alpha)\log t+s\langle\alpha,v\rangle,\)\,
\( s>0 .
\)
Then
\[
F_t(x_s)=\max_{\alpha\in\mathrm{Vert}(\Delta)} g_\alpha(s).
\]
Assume that there exists a vertex $\alpha_1\in\mathrm{Vert}(\Delta)$
such that
\[
\langle\alpha_1,v\rangle>
\langle\alpha,v\rangle
\qquad
\text{for all }\alpha\in\mathrm{Vert}(\Delta),\ \alpha\neq\alpha_1 .
\]

\begin{lemma}\label{lem5}
With the above notation, 
 there exists $s_0>0$ such that for all $s\ge s_0$ we have 
\(
F_t(x_s)=g_{\alpha_1}(s).
\)
 \end{lemma}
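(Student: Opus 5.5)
The idea is to compare the affine functions $g_\alpha$ one at a time against $g_{\alpha_1}$. Each is affine in $s$, and $g_{\alpha_1}$ has the strictly largest slope, so eventually it dominates each of the finitely many others; taking the largest threshold then gives $s_0$. Here $t\in(0,1/e]$ and $x_0$ are fixed throughout, so $s_0$ may depend on them.

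For each vertex $\alpha\neq\alpha_1$ set
\[
\delta_\alpha=\langle\alpha_1-\alpha,v\rangle>0,
\qquad
b_\alpha=\langle\alpha-\alpha_1,x_0\rangle+(\nu(\alpha)-\nu(\alpha_1))\log t .
\]
Then
\[
g_{\alpha_1}(s)-g_\alpha(s)=s\,\delta_\alpha-b_\alpha ,
\]
which is nonnegative as soon as $s\ge b_\alpha/\delta_\alpha$. Because $\mathrm{Vert}(\Delta)$ is finite, we may define
\[
s_0=\max\Bigl\{1,\ \max_{\alpha\neq\alpha_1} b_\alpha/\delta_\alpha\Bigr\}.
\]
This is a well-defined positive number; if $\alpha_1$ is the only vertex, we simply take $s_0=1$. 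For every $s\ge s_0$ we then have $g_{\alpha_1}(s)\ge g_\alpha(s)$ for all $\alpha$, and therefore
\[
F_t(x_s)=\max_\alpha g_\alpha(s)=g_{\alpha_1}(s).
\]
In fact the inequality is strict once $s>s_0$, so the maximum is attained uniquely at $\alpha_1$.

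There is no real obstacle. The only points needing care are that the strict inequality $\delta_\alpha>0$ is exactly the hypothesis on $v$, and that finiteness of the vertex set is what makes the maximum defining $s_0$ finite. The constant $s_0$ depends on $t$ (through $\log t$) and on $x_0$, but the lemma asserts only existence for fixed $t$ and $x_0$, which is how it is used in the proof of Theorem \ref{prop1}.
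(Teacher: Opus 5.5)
Your proposal is correct and follows essentially the same route as the paper: write $g_{\alpha_1}(s)-g_\alpha(s)$ as a constant plus $s\,\delta_\alpha$ with $\delta_\alpha=\langle\alpha_1-\alpha,v\rangle>0$, get a threshold beyond which $g_{\alpha_1}$ dominates each $g_\alpha$, and take the maximum over the finitely many vertices. Your version only makes the threshold $b_\alpha/\delta_\alpha$ explicit, treats the other terms correctly as constants in $s$ (the paper says they grow "at most logarithmically"), and handles the positivity of $s_0$ and the one-vertex case.
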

 
 \begin{proof}
Let $\alpha\in\mathrm{Vert}(\Delta)$ with $\alpha\neq\alpha_1$ and
consider the difference
\(
g_{\alpha_1}(s)-g_\alpha(s).
\)
Substituting the definitions gives
\[
g_{\alpha_1}(s)-g_\alpha(s)
=
\langle\alpha_1-\alpha,x_0\rangle
+
(\nu(\alpha_1)-\nu(\alpha))\log t
+
s(\langle\alpha_1,v\rangle-\langle\alpha,v\rangle).
\]
Define
\(
\delta_\alpha=\langle\alpha_1,v\rangle-\langle\alpha,v\rangle .
\)
By assumption one has $\delta_\alpha>0$ for every $\alpha\neq\alpha_1$.
Hence
\[
g_{\alpha_1}(s)-g_\alpha(s)
=
A_\alpha
+
B_\alpha\log t
+
s\,\delta_\alpha ,
\]
where
\(
A_\alpha=\langle\alpha_1-\alpha,x_0\rangle,\)
and \(
B_\alpha=\nu(\alpha_1)-\nu(\alpha).
\)
Since $\delta_\alpha>0$, the term $s\,\delta_\alpha$ grows linearly
to $+\infty$ as $s\to\infty$, while the remaining terms grow at most
logarithmically. Consequently
\[
g_{\alpha_1}(s)-g_\alpha(s)\to+\infty
\qquad\text{as }s\to\infty .
\]
Therefore there exists a number $s_\alpha>0$ such that
\(
g_{\alpha_1}(s)>g_\alpha(s)\)
for all \(s\ge s_\alpha .
\)
Since the set $\mathrm{Vert}(\Delta)$ is finite, the set
\[
\{s_\alpha\mid \alpha\in\mathrm{Vert}(\Delta),\ \alpha\neq\alpha_1\}
\]
is finite. Define
\(
s_0=\max_{\alpha\neq\alpha_1} s_\alpha .
\)
Then for every $\alpha\neq\alpha_1$ and every $s\ge s_0$ one has
\(
g_{\alpha_1}(s)>g_\alpha(s).
\)
Consequently for all $s\ge s_0$
\[
\max_{\alpha\in\mathrm{Vert}(\Delta)} g_\alpha(s)
=
g_{\alpha_1}(s).
\]
Substituting the definition of $g_{\alpha_1}$ yields
\[
F_t(x_s)
=
\langle\alpha_1,x_0\rangle
+
\nu(\alpha_1)\log t
+
s\langle\alpha_1,v\rangle
\qquad\text{for all }s\ge s_0 .
\]

Thus there exists $s_0>0$ such that for every $s\ge s_0$
the maximum in the definition of $F_t(x_s)$ is attained at
$\alpha_1$, which proves the lemma.
\end{proof}


\medskip
 
\section{Stability of the Spine and Tropical Subdivision}

In tropical geometry, the spine of an amoeba provides a piecewise-linear
skeleton capturing the essential combinatorial features of the amoeba.
When a Laurent polynomial undergoes a degeneration governed by a weight
function on its exponents, the associated amoebas deform and their spines
are expected to approach the tropical hypersurface determined by the
corresponding tropical polynomial. A natural question is whether this
convergence also preserves the combinatorial structure of the spine.
In particular, one may ask whether the subdivision of the Newton polytope
dual to the spine stabilizes and eventually coincides with the regular
subdivision induced by the tropical limit. The following theorem shows
that this is indeed the case for maximally sparse Laurent polynomials:
for sufficiently small values of the degeneration parameter, the spine
of the amoeba has the same combinatorial type as the tropical hypersurface,
and the corresponding dual subdivision of the Newton polytope becomes
identical to the one determined by the tropical polynomial.

\begin{theorem}[Tropical stability for maximally sparse polynomials]
Let
\[
f(z)=\sum_{\alpha\in \mathrm{Vert}(\Delta_f)} a_\alpha z^\alpha
\]
be a maximally sparse Laurent polynomial, that is,
\(
\supp(f)=\mathrm{Vert}(\Delta_f).
\)
Let $\nu:\mathrm{Vert}(\Delta_f)\to\mathbb{R}$ be a weight function and
consider the family
\[
f_t(z)=\sum_{\alpha\in \mathrm{Vert}(\Delta_f)} a_\alpha t^{\nu(\alpha)} z^\alpha,
\qquad t\in(0,1].
\]
Let $\Gamma_t$ be the spine of the amoeba $\mathscr{A}_{f_t}$ and let
$\Gamma_\infty$ be the tropical hypersurface defined by the tropical
polynomial
\[
F(x)=\max_{\alpha\in \mathrm{Vert}(\Delta_f)}
\{\langle\alpha,x\rangle-\nu(\alpha)\}.
\]
Denote by $\tau_\infty$ the regular subdivision of the Newton polytope
$\Delta_f$ dual to $\Gamma_\infty$.
Then there exists a number $t_0>0$ such that for every $t\in(0,t_0)$
the spine $\Gamma_t$ has the same combinatorial type as the tropical
hypersurface $\Gamma_\infty$. Equivalently, the subdivision of the
Newton polytope dual to $\Gamma_t$ coincides with $\tau_\infty$ for all
sufficiently small $t$.
\end{theorem}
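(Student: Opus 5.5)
The plan is to make the spine $\Gamma_t$ completely explicit as the corner locus of a tropical polynomial whose coefficients are known in closed form. Then the statement reduces to a question about how regular subdivisions of $\mathrm{Vert}(\Delta_f)$ depend on the lifting heights.

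\textbf{Step 1: identify the terms of the Passare--Rullg{\aa}rd polynomial.} By Theorem~\ref{prop1}, for small $t$ every complement component of $\mathscr{A}_{f_t}$ has order equal to a vertex of $\Delta_f$. Conversely, every vertex $\alpha$ has a nonempty (unbounded) complement component $E_\alpha(t)$: far out in a direction $v$ maximized only at $\alpha$, the monomial $a_\alpha t^{\nu(\alpha)}z^\alpha$ dominates, as in Lemma~\ref{lem5}. Hence the index set $A'$ in the definition of the spine equals $\mathrm{Vert}(\Delta_f)$.

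\textbf{Step 2: compute the constants $c_\alpha(t)$.} I will use the classical fact that on a vertex component the Ronkin function equals the dominant monomial exactly:
\[
N_{f_t}(x)=\log|a_\alpha|+\nu(\alpha)\log t+\langle\alpha,x\rangle \qquad (x\in E_\alpha(t)).
\]
To prove it, write $f_t=a_\alpha t^{\nu(\alpha)}z^\alpha(1+h)$ with $|h|<1$ on $\Log^{-1}(x)$ for $x$ deep in $E_\alpha(t)$. The mean of $\log|1+h|$ over the torus vanishes, because $h$ expands as a power series in monomials $z^{\beta-\alpha}$ with $\beta\neq\alpha$, none of which has zero mean except the trivial one. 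Affineness of $N_{f_t}$ on the convex set $E_\alpha(t)$ then propagates the formula to all of $E_\alpha(t)$. Consequently $\Gamma_t$ is the corner locus of
\[
x\mapsto\max_{\alpha\in\mathrm{Vert}(\Delta_f)}\{\langle\alpha,x\rangle+\nu(\alpha)\log t+\log|a_\alpha|\}.
\]
After the rescaling $y=x/(-\log t)$ used in the Spine Convergence Theorem, this corner locus becomes that of
\[
F_\varepsilon(y)=\max_\alpha\{\langle\alpha,y\rangle-\nu(\alpha)+\varepsilon\,\lambda_\alpha\},\qquad \varepsilon=\frac{1}{-\log t}\to0^+,\quad \lambda_\alpha=\log|a_\alpha|.
\]
Rescaling does not change the combinatorial type, so the dual subdivision of $\Gamma_t$ is the regular subdivision $\tau_\varepsilon$ of $\mathrm{Vert}(\Delta_f)$ induced by the heights $-\nu+\varepsilon\lambda$.

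\textbf{Step 3: stabilization in $\varepsilon$.} The regular subdivisions of a finite point set are governed by the secondary fan, a finite complete fan in $\mathbb{R}^{\mathrm{Vert}(\Delta_f)}$. The ray $\varepsilon\mapsto-\nu+\varepsilon\lambda$ enters, for all sufficiently small $\varepsilon>0$, the relative interior of a single cone containing $-\nu$ in its closure. So $\tau_\varepsilon$ is constant for $\varepsilon\in(0,\varepsilon_0)$, and it equals the refinement of $\tau_\infty$ obtained by subdividing each cell $\sigma$ of $\tau_\infty$ regularly with the heights $\lambda|_{\sigma}$. This gives $t_0=e^{-1/\varepsilon_0}$ and proves the stabilization half of the theorem.

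\textbf{Main obstacle.} The hard part is to show that this refinement is trivial, i.e.\ $\tau_\varepsilon=\tau_\infty$. The refinement is trivial whenever $\lambda$ restricted to the vertices of each cell of $\tau_\infty$ is affine. This is automatic when every cell of $\tau_\infty$ is a simplex, which covers generic $\nu$ and in particular every cell of dimension $\le1$. In general, a non-simplicial cell of $\tau_\infty$ could be split by the moduli $|a_\alpha|$. I would handle this as the paper does with $\xi_\alpha=a_\alpha e^{\nu(\alpha)}$: interpret $\nu$ as the Passare--Rullg{\aa}rd function of $f$ itself, so that $\log|a_\alpha|$ is already absorbed into the weights and $\lambda$ is affine on each cell. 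If that normalization is unavailable, I would state the result under the hypothesis that $\nu$ is generic, i.e.\ $\tau_\infty$ is a triangulation, where Step~3 closes the proof directly.
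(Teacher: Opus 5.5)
The step you call the main obstacle is a genuine gap, and no argument can close it: for an arbitrary weight $\nu$ the statement is false, and your Steps 2--3 already show why.

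\textbf{Counterexample.} Take $f=1+z_1+z_2+c\,z_1z_2$ with $|c|>1$ and $\nu\equiv 0$.
\begin{itemize}
\item Then $f_t=f$ for every $t$ and $F(x)=\max\{0,x_1,x_2,x_1+x_2\}$. So $\Gamma_\infty$ is the union of the two coordinate axes, with one four-valent vertex, and $\tau_\infty=\{\Delta_f\}$.
\item The unit square has no lattice points besides its vertices. Hence the orders of the complement components are exactly the four vertices, with no appeal to Theorem~\ref{prop1}.
\item By your Step 2, $\Gamma_t$ is the corner locus of $\max\{0,x_1,x_2,x_1+x_2+\log|c|\}$. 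It has a bounded edge from $(0,-\log|c|)$ to $(-\log|c|,0)$. It is dual to the triangulation of the square by the diagonal through $(0,0)$ and $(1,1)$, for every $t$.
\end{itemize}
So no $t_0$ exists.

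\textbf{What your argument does prove.} Granting Theorem~\ref{prop1} in Step 1, exactly as the paper does, Steps 1--3 give the corrected statement. The combinatorial type of $\Gamma_t$ is constant for small $t$. Its dual subdivision is the refinement of $\tau_\infty$ by the heights $\log|a_\alpha|$. This equals $\tau_\infty$ if and only if $\alpha\mapsto\log|a_\alpha|$ is affine on the vertex set of every cell of $\tau_\infty$.

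Your two closing remedies are therefore hypotheses the theorem needs, not ways of treating the general case, and you should state them as such:
\begin{itemize}
\item generic $\nu$, so that $\tau_\infty$ is a triangulation;
\item $\nu$ equal to the Passare--Rullg{\aa}rd function of $f$, so that $\log|a_\alpha|=-\nu(\alpha)$ is affine on each cell (equivalently $|\xi_\alpha|=1$ in the paper's normalization).
\end{itemize}

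\textbf{Comparison with the paper's proof.} Your route is genuinely different from the paper's, and more informative. The paper combines three ingredients, none of which can distinguish $\tau_\infty$ from its $\log|a|$-refinement:
\begin{itemize}
\item \emph{Convergence of the spines to $\Gamma_\infty$.} The extra bounded faces have size $O(1)$ in the $x$-coordinates, so they collapse to points after rescaling by $1/(-\log t)$. In the example, the rescaled spines do converge to $\Gamma_\infty$ in the Hausdorff sense, while the unrescaled ones are constant and differ from $\Gamma_\infty$.
\item \emph{Finiteness of the set of possible normal directions.} This gives no mechanism that would remove a refinement present for all $t$.
\item \emph{Theorem~\ref{prop1}.} It controls only which lattice points are vertices of the dual subdivision, not how they are assembled into cells.
\end{itemize}
Your exact formula $c_\alpha(t)=\log|a_\alpha|+\nu(\alpha)\log t$ on vertex components reduces the question to the one-parameter family of heights $-\nu+\varepsilon\log|a|$. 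The secondary fan then gives an actual proof of the stabilization that the paper only asserts, together with the correct identification of the limit subdivision.
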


\begin{proof}
Let $N_{f_t}$ denote the Ronkin function associated with the polynomial
$f_t$. It is known that $N_{f_t}$ is a convex function on $\mathbb{R}^n$
which is affine linear on each connected component of the complement
$\mathbb{R}^n\setminus\mathscr{A}_{f_t}$. The spine $\Gamma_t$ of the
amoeba is the set of points where the piecewise affine linear function approximating
Ronkin function is not
differentiable.
A fundamental property of Ronkin functions states that
\[
N_{f_t}(x)+(\log t)\,C \longrightarrow
F(x)
\]
locally uniformly on $\mathbb{R}^n$ as $t\to 0$, where $C$ is a
constant depending only on the coefficients of the polynomial and
\[
F(x)=\max_{\alpha\in\mathrm{Vert}(\Delta_f)}
\{\langle\alpha,x\rangle-\nu(\alpha)\}
\]
is the tropical polynomial associated with the degeneration.
Consequently the spines $\Gamma_t$ converge, in the Hausdorff topology
on compact subsets of $\mathbb{R}^n$, to the tropical hypersurface
$\Gamma_\infty$ defined as the corner locus of $F$.

The Ronkin function is approximated by a piecewise affine linear function  and its affine pieces have
gradients equal to the orders of complement components of the amoeba.
These orders belong to the finite set
\[
\Delta_f\cap\mathbb{Z}^n.
\]
Therefore the normal vectors of the faces of the spine $\Gamma_t$
belong to the finite set
\[
\{\beta_i-\beta_j \mid
\beta_i,\beta_j\in\Delta_f\cap\mathbb{Z}^n\}.
\]
Hence the slopes of all faces of $\Gamma_t$ belong to a finite set of
rational directions.

Since $\Gamma_t$ converges to the tropical hypersurface $\Gamma_\infty$
and the possible slopes of the faces belong to a finite set, the
combinatorial type of the polyhedral complex $\Gamma_t$ can change
only finitely many times as $t$ varies. If infinitely many changes
occurred as $t\to0$, new faces with new supporting hyperplanes would
have to appear infinitely often, which is impossible because the set
of possible directions is finite.
The role of the maximally sparse assumption now becomes essential.
Because the support of the polynomial consists only of the vertices
of the Newton polytope, the tropical polynomial involves only the
affine functions
\[
\langle\alpha,x\rangle-\nu(\alpha),
\qquad \alpha\in\mathrm{Vert}(\Delta_f).
\]
Consequently the regular subdivision $\tau_\infty$ of the Newton
polytope induced by the lifting
\[
\alpha\longmapsto (\alpha,\nu(\alpha))
\]
uses only these vertices. In particular, no lattice point in the
interior of $\Delta_f$ can appear as a vertex of the subdivision.
Therefore the combinatorial structure of the tropical hypersurface
$\Gamma_\infty$ is completely determined by the vertices of the
Newton polytope.

Since $\Gamma_t$ converges to $\Gamma_\infty$ and only finitely many
polyhedral types are possible, and by Theorem \ref{prop1} 
 there exists a number $t_0>0$ such that
for every $t\in(0,t_0)$ the spine $\Gamma_t$ has the same combinatorial
type as $\Gamma_\infty$.

\end{proof}


\section{Stability of Solid Amoebas in Maximally Sparse Degenerations}

In the study of amoebas associated with families of Laurent polynomials,
an important question concerns the behavior of the topology of the amoeba
under degenerations of the coefficients. In particular, when the support
of the polynomial consists only of the vertices of its Newton polytope,
the polynomial is said to be maximally sparse, and the geometry of the
associated amoebas exhibits strong rigidity properties. When such a
polynomial is deformed by introducing weights on its monomials, one
obtains a family of hypersurfaces whose images under the logarithmic map
produce a family of amoebas depending on a parameter $t$. A natural
problem is to understand whether the qualitative structure of these
amoebas remains stable as the parameter varies. The following theorem
shows that the property of being solid, meaning that the complement of
the amoeba has exactly as many connected components as the number of
vertices of the Newton polytope, is preserved throughout the entire
degeneration. More precisely, the set of parameters for which the amoeba
is solid is both open and closed in the interval $(0,1/e]$, and therefore
coincides with the whole interval.

\vspace{0.1cm}
 
\begin{theorem}
Let $f$ be a maximally sparse Laurent polynomial, that is,
\(
\supp(f)=\mathrm{Vert}(\Delta_f).
\)
Consider the family
\[
f_t(z)=\sum_{\alpha\in\supp(f)} \xi_\alpha t^{\nu(\alpha)}z^\alpha,
\qquad t\in(0,1/e],
\]
and denote by $V_{f_t}$ the hypersurface in $(\mathbb{C}^*)^n$
defined by $f_t$. Let
\(
V_{\infty,t}=H_t(V_{f_t})
\)
be the corresponding family of $J_t$-holomorphic hypersurfaces and
let
\(
\mathscr{A}_t=\mathscr{A}_{H_t(V_{f_t})}
\)
be their amoebas.

Define
\[
\mathscr{S}=
\left\{
t\in(0,1/e]\mid \mathscr{A}_{H_t(V_{f_t})}\ \text{is solid}
\right\}.
\]
Then $\mathscr{S}$ is nonempty, open, and closed in $(0,1/e]$.
Consequently
\(
\mathscr{S}=(0,1/e].
\)
\end{theorem}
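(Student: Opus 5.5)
Throughout, I use that $H_t$ acts on $\Log$-images as the dilation $x\mapsto t\,x$, since $\Log(H_t(z))=t\,\Log(z)$. Hence $\mathscr{A}_{H_t(V_{f_t})}=t\,\mathscr{A}_{f_t}$, and the two amoebas have complements with the same number of connected components. So the question concerns only the ordinary amoebas $\mathscr{A}_{f_t}$, whose coefficients $\xi_\alpha t^{\nu(\alpha)}$ depend continuously (indeed real-analytically) on $t\in(0,1/e]$.

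The plan has three steps, in this order.

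\textbf{Step 1 (preliminary fact, then nonemptiness).} For every $t$, each vertex $\alpha\in\mathrm{Vert}(\Delta_f)$ is the order of an unbounded complement component. This follows from the proof of Theorem \ref{prop1}: far out in the cone where $\langle\alpha,x\rangle$ dominates, the monomial $\xi_\alpha t^{\nu(\alpha)}z^\alpha$ dominates, by Lemma \ref{lem5}. By injectivity of $\ord$, $\mathscr{A}_{f_t}$ is solid if and only if no component has order in $(\Delta_f\cap\mathbb{Z}^n)\setminus\mathrm{Vert}(\Delta_f)$. Nonemptiness of $\mathscr{S}$ then follows from Theorem \ref{prop1}, which gives a $t_0$ such that every $t<t_0$ has only vertex slopes. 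The tropical stability theorem confirms that the dual subdivision of $\Gamma_t$ equals $\tau_\infty$, which has no non-vertex lattice points.

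\textbf{Step 2 (closedness).} I would show that the complement $(0,1/e]\setminus\mathscr{S}$ is open. Suppose that for some $t_1$ there is a component $E$ of order $p\notin\mathrm{Vert}(\Delta_f)$. Pick $x\in E$. The condition $x\notin\mathscr{A}_{f_t}$ is open in $t$, since $V_{f_t}\cap\Log^{-1}(x)$ is empty and the torus $\Log^{-1}(x)$ is compact. The order of the component containing $x$ is given by the integral formula
\[
\ord(x)_j=\frac{1}{(2\pi i)^n}\int_{\Log^{-1}(x)}\frac{z_j\partial_j f_t}{f_t}\,\frac{dz}{z}.
\]
This is integer valued and continuous in $t$ as long as $x$ stays off the amoeba, so it stays equal to $p$ for $t$ near $t_1$. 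Hence non-solidity persists near $t_1$.

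\textbf{Step 3 (openness).} I must show that no component of interior order can be born at some $t_2$ as a limit of solid amoebas. My first attempt is to run the comparison argument of Theorem \ref{prop1} uniformly on a neighborhood of $t_2$. By Lemma \ref{lem7} the Ronkin function satisfies $|N_{f_t}-F_t|\le C$ with $C$ locally uniform in $t$, and a slope $p$ appearing in $N_{f_t}$ would have to be realized on a set where $F_t$ is far from $N_{f_t}$.

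\textbf{Expected obstacle.} This is the main obstacle, and I am not confident the approach works. The ray argument in Theorem \ref{prop1} only rules out non-vertex slopes on \emph{unbounded} components. A newly born component of interior order is bounded, so the limit along rays gives no information, and the bounded-gap estimate does not by itself prevent a small bounded linearity domain of $N_{f_t}$.

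What I would try to close this gap is to use the Passare--Rullg{\aa}rd constants $c_p(t)$ as continuous functions of $t$. A component of order $p$ exists exactly when the affine function $\langle p,x\rangle-c_p(t)$ touches $N_{f_t}$ on an open set. The hope is that maximal sparsity, via the explicit lifting $\nu$ on the vertices alone, forces this affine function to lie strictly below the convex hull of the vertex pieces throughout $(0,1/e]$, by a margin bounded away from zero.

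If such a uniform strict inequality can be established, Step 3 follows. Connectedness of $(0,1/e]$ then gives $\mathscr{S}=(0,1/e]$.
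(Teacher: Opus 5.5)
You have a genuine gap, and it is where the whole content of the theorem lies. Your Step~2 is correct. It is a cleaner version of the paper's closedness argument: off the amoeba, the logarithmic-derivative integral for $\ord$ is continuous, hence locally constant, in $(t,x)$, so a component of non-vertex order persists for nearby $t$. But once closedness is known, $\mathscr{S}=(0,1/e]$ is \emph{equivalent} to nonemptiness plus openness, and neither is proved.

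For openness (Step~3) you give no working argument, and the repair you sketch is circular. The Passare--Rullg{\aa}rd constants $c_p$ are only defined when a component of order $p$ exists. Their natural extension to all $p\in\Delta_f$ is $-N_{f_t}^*(p)=\inf_x\bigl(N_{f_t}(x)-\langle p,x\rangle\bigr)$. The statement that the supporting affine function of slope $p$ meets the graph of $N_{f_t}$ only on a set with empty interior is just a rephrasing of ``there is no component of order $p$''. So the uniform strict inequality you hope for \emph{is} the theorem, and you indicate no way in which maximal sparsity would produce it. The paper does not fill this hole either. It runs the same nonempty/open/closed scheme, and its openness step only asserts that no new slopes can appear because the support is fixed, which is exactly the point at issue.

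Step~1 is not established either, for the reason you raise in Step~3, and the situation is worse than you say. The ray argument in the proof of Theorem~\ref{prop1} needs $x_0+sv\in E$ for all $s\ge 0$. For $v$ in the open normal cone of the vertex $\alpha_1$, the points $x_0+sv$ eventually enter the convex region where the $\alpha_1$-monomial dominates the sum of the others in absolute value. That region lies in the vertex component $E_{\alpha_1}$; this is the same dominance you invoke yourself. So the premise holds only for $E=E_{\alpha_1}$, and the argument excludes no component of non-vertex order at all, bounded or not. The tropical stability theorem you cite uses Theorem~\ref{prop1} at its decisive step, so it is not independent support.

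Nor can the degeneration help by itself. In the Passare--Rullg{\aa}rd normalization, $\nu(\alpha)=-\log|a_\alpha|$ and $\xi_\alpha=a_\alpha/|a_\alpha|$ on vertices. Suppose $\nu$ is affine on $\mathrm{Vert}(\Delta_f)$, say $\nu(\alpha)=\langle u,\alpha\rangle+c$; for example, all $|a_\alpha|=1$ gives $f_t=f$ for every $t$. Then $f_t(z)=t^{c}g(t^{u_1}z_1,\dots,t^{u_n}z_n)$ with $g=\sum_\alpha\xi_\alpha z^\alpha$. Hence all the $\mathscr{A}_{f_t}$ are translates of one another and of $\mathscr{A}_f$. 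In that case $\mathscr{S}$ is empty or everything, $\tau_\infty$ is the trivial subdivision, and ``solid for small $t$'' is literally the solidity of $\mathscr{A}_f$ that you set out to prove.

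What is missing is an actual mechanism, using maximal sparsity, that excludes \emph{bounded} complement components of non-vertex order. Comparing $N_{f_t}$ with $F_t$ along rays, or continuity in $t$, cannot detect such components.
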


\begin{proof}
For every $t\in(0,1/e]$ the Laurent polynomial
\[
f_t(z)=\sum_{\alpha\in \mathrm{Vert}(\Delta_f)} \xi_\alpha t^{\nu(\alpha)}z^\alpha
\]
has the same support as $f$, namely the set of vertices of the Newton
polytope $\Delta_f$. Hence the Newton polytope of $f_t$ coincides with
$\Delta_f$ for all $t$.

Let $V_{f_t}\subset (\mathbb C^*)^n$ be the hypersurface defined by $f_t$
and let
\[
\Log:(\mathbb C^*)^n\to\mathbb R^n,\qquad 
\Log(z_1,\dots,z_n)=(\log|z_1|,\dots,\log|z_n|)
\]
be the logarithmic map. The amoeba of a hypersurface $V$ is defined by
\[
\mathscr A_V=\Log(V).
\]
The rescaling map $H_t$ appearing in the tropical degeneration induces a
homeomorphism of $(\mathbb C^*)^n$ which corresponds to a linear
transformation of $\mathbb R^n$ under the logarithmic map. Therefore
there exists a linear homeomorphism $L_t:\mathbb R^n\to\mathbb R^n$ such
that
\[
\mathscr A_{H_t(V_{f_t})}=L_t(\mathscr A_{V_{f_t}}).
\]
Since linear homeomorphisms preserve connectedness properties of the
complement, the amoeba $\mathscr A_{H_t(V_{f_t})}$ is solid if and only
if $\mathscr A_{V_{f_t}}$ is solid. Consequently the property defining
$\mathscr S$ is equivalent to the solidness of $\mathscr A_{V_{f_t}}$.

The complement components of the amoeba are in bijection with the
linearity domains of the Ronkin function
\[
N_{f_t}(x)=\frac{1}{(2\pi)^n}\int_{[0,2\pi]^n}
\log|f_t(e^{x+i\theta})|\,d\theta .
\]
The Ronkin function is convex on $\mathbb R^n$ and affine precisely on
the connected components of $\mathbb R^n\setminus\mathscr A_{V_{f_t}}$.
Moreover its gradient on each linearity domain is an integer vector
belonging to the set
\[
\Delta_f\cap\mathbb Z^n .
\]

Since the polynomial $f_t$ contains monomials only at the vertices of
$\Delta_f$, Proposition~\ref{prop1} implies that for sufficiently small
$t>0$ the Ronkin function $N_{f_t}$ has linearity domains only with
slopes corresponding to the vertices of $\Delta_f$. Hence the number of
connected components of $\mathbb R^n\setminus\mathscr A_{V_{f_t}}$ is
equal to the number of vertices of $\Delta_f$. Therefore the amoeba is
solid for all sufficiently small values of $t$. This shows that
$\mathscr S$ is nonempty.

We now prove that $\mathscr S$ is open in $(0,1/e]$. The coefficients of
$f_t$ depend continuously on $t$, hence the Ronkin function $N_{f_t}$
depends continuously on $t$ uniformly on compact subsets of
$\mathbb R^n$. The number of linearity domains of a convex function is
locally constant under small perturbations provided that no new slopes
appear. Since the support of the polynomial remains fixed and equal to
$\mathrm{Vert}(\Delta_f)$, no new slopes can arise under sufficiently
small perturbations of $t$. Consequently if the amoeba is solid for some
$t_0$, it remains solid for all $t$ sufficiently close to $t_0$, which
shows that $\mathscr S$ is open.

To prove that $\mathscr S$ is closed, let $\{t_k\}\subset\mathscr S$ be a
sequence converging to some $t_0\in(0,1/e]$. The polynomials $f_{t_k}$
converge coefficientwise to $f_{t_0}$, and the corresponding Ronkin
functions $N_{f_{t_k}}$ converge uniformly on compact subsets to
$N_{f_{t_0}}$. The linearity domains of the Ronkin function can only
disappear in the limit but cannot create new slopes outside
$\mathrm{Vert}(\Delta_f)$. Since each $f_{t_k}$ has exactly
$\#\mathrm{Vert}(\Delta_f)$ complement components, the same holds for
$f_{t_0}$. Thus the amoeba $\mathscr A_{V_{f_{t_0}}}$ remains solid and
$t_0\in\mathscr S$. Hence $\mathscr S$ is closed in $(0,1/e]$.

Since $(0,1/e]$ is connected and $\mathscr S$ is nonempty, open, and
closed in $(0,1/e]$, it follows that
\[
\mathscr S=(0,1/e].
\]
\end{proof}


\section{Compact Complement Components and the Newton Subdivision}

The topology of the complement of an amoeba is closely related to the
convex–geometric structure encoded by the Newton polytope of the defining
Laurent polynomial. In particular, the Ronkin function and the spine of the
amoeba provide a bridge between the analytic geometry of the hypersurface
and the combinatorics of a convex subdivision of the Newton polytope.
The spine determines a regular subdivision of $\Delta_f$, and the affine
linearity domains of the Ronkin function correspond to lattice points of
the polytope via the order map of amoebas. A natural question is to
identify precisely which lattice points give rise to compact connected
components of the complement of the amoeba. The following lemma answers
this question by establishing a direct correspondence between compact
complement components of $\mathbb{R}^n\setminus\mathscr{A}_f$ and the
interior vertices of the subdivision of the Newton polytope dual to the
spine of the amoeba.

\medskip

\begin{lemma}\label{lem4}
Let
\[
f(z)=\sum_{\alpha\in\supp(f)} a_\alpha z^\alpha
\]
be a Laurent polynomial with Newton polytope $\Delta_f\subset\mathbb{R}^n$.
Let $\mathscr{A}_f$ be the amoeba of the hypersurface $V_f\subset(\mathbb{C}^*)^n$,
let $N_f$ be the Ronkin function of $f$, and let $\Gamma$ be the spine of the amoeba.
Denote by $\tau$ the convex subdivision of $\Delta_f$ dual to the spine $\Gamma$.

Then a lattice point $\alpha\in\Delta_f\cap\mathbb{Z}^n$ corresponds to a compact
connected component of the complement $\mathbb{R}^n\setminus\mathscr{A}_f$
if and only if $\alpha$ is an interior vertex of the subdivision $\tau$ of the Newton polytope.
\end{lemma}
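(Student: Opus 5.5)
The plan is to read the statement entirely through the spine $\Gamma$ and the tropical polynomial
\[
f_{\mathrm{trop}}(x)=\max_{\beta\in A'}\{c_\beta+\langle\beta,x\rangle\},
\]
where $A'=\ord(\text{components of }\mathbb{R}^n\setminus\mathscr{A}_f)$. The standard facts I will take from \cite{PR1-04,FPT-00} are as follows. First, $\Gamma\subset\mathscr{A}_f$, and the amoeba deformation retracts onto $\Gamma$. Second, each complement component $E_\beta$ (with $\beta\in A'$) is contained in the region $U_\beta=\{x : f_{\mathrm{trop}}(x)=c_\beta+\langle\beta,x\rangle\}$, and $U_\beta$ is the unique component of $\mathbb{R}^n\setminus\Gamma$ containing $E_\beta$. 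Third, the vertices of $\tau$ are exactly the points of $A'$. In particular, the lattice points that "correspond to a component" are precisely the vertices of $\tau$, so the lemma reduces to this claim: for $\beta\in A'$, the component $E_\beta$ is bounded if and only if $\beta\notin\partial\Delta_f$.

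\textbf{Step 1: $E_\beta$ is bounded if and only if $U_\beta$ is bounded.} The inclusion $E_\beta\subset U_\beta$ handles one direction. For the other, use the recession cone. By \cite{FPT-00}, the recession cone of $E_\beta$ equals the normal cone $\sigma_\beta$ of $\Delta_f$ at $\beta$, namely the set of $v$ with $\langle\beta,v\rangle\ge\langle\gamma,v\rangle$ for all $\gamma\in\Delta_f$. On the other hand, $U_\beta$ is the polyhedron cut out by the inequalities $c_\beta+\langle\beta,x\rangle\ge c_\gamma+\langle\gamma,x\rangle$ for $\gamma\in A'$. Its recession cone is therefore the set of $v$ with $\langle\beta-\gamma,v\rangle\ge0$ for all $\gamma\in A'$. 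Since $\mathrm{Vert}(\Delta_f)\subset A'$ (vertex components always exist by \cite{FPT-00}), this cone is again $\sigma_\beta$. So both sets are bounded exactly when $\sigma_\beta=\{0\}$.

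\textbf{Step 2: duality of cones.} The condition $\sigma_\beta=\{0\}$ holds if and only if no nonzero linear functional attains its maximum over $\Delta_f$ at $\beta$. This is equivalent to $\beta$ not lying on any proper face of $\Delta_f$, i.e.\ $\beta\in\mathrm{int}\,\Delta_f$. I will state this for full-dimensional $\Delta_f$. In the degenerate case, every component is unbounded along the lineality directions $(\mathrm{aff}\,\Delta_f)^\perp$, and the statement must be read relative to the affine span.

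\textbf{Step 3: combine.} Putting Steps 1 and 2 together with the identification of vertices of $\tau$ with $A'$: a lattice point gives a compact component exactly when it is a vertex of $\tau$ lying in the interior of $\Delta_f$, i.e.\ an interior vertex of $\tau$. Conversely, an interior lattice point that is not a vertex of $\tau$ has no component at all.

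The main obstacle is the identification of the recession cone of $E_\beta$ with $\sigma_\beta$. I plan to prove the inclusion $\mathrm{rec}(E_\beta)\subset\sigma_\beta$ as follows. Take $v\in\mathrm{rec}(E_\beta)$ and a point $x_0\in E_\beta$. Along the ray $x_0+sv$, the Ronkin function $N_f$ is affine with gradient $\beta$. Because $N_f$ differs from $\max_\alpha\{\langle\alpha,x\rangle+\log|a_\alpha|\}$ by a bounded amount, the same asymptotic slope argument as in Theorem~\ref{prop1} and Lemma~\ref{lem5} gives $\langle\beta,v\rangle=\max_{\alpha}\langle\alpha,v\rangle$. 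Hence $v\in\sigma_\beta$.

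For the reverse inclusion I will use the fact that $\mathscr{A}_f$ lies within bounded distance of $\Gamma_\infty$-type tentacles. Concretely, if $v$ is in the interior of $\sigma_\beta$ and $\beta$ is a vertex, then the monomial $\beta$ dominates far along $v$, and the ray eventually leaves $\mathscr{A}_f$. For a non-vertex boundary point $\beta$, the cone $\sigma_\beta$ is nonzero, and unboundedness follows from convexity of $E_\beta$ together with the corresponding statement for the face of $\Delta_f$ containing $\beta$, applied to the truncated polynomial. If the reverse inclusion proves delicate, the lemma only needs $\sigma_\beta\neq\{0\}\Rightarrow E_\beta$ unbounded. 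I would settle for that and cite \cite{FPT-00}, Proposition 2.6, where the recession cone equality is proved.
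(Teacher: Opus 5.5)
Your argument is correct, but it takes a different route from the paper's. The paper argues purely through the combinatorial duality between the spine and $\tau$. A compact $E_\alpha$ means the affine piece with slope $\alpha$ is ``surrounded in all directions'', so $\alpha$ is surrounded by cells of $\tau$. Conversely, an interior vertex has a complete fan of cells around it, so the dual region of the spine is bounded, and the paper then asserts that this region \emph{is} $E_\alpha$. That identification is inaccurate: only $E_\alpha\subset U_\alpha$ holds, because the amoeba has thickness. As a result, the paper's direction ``compact $\Rightarrow$ interior'' silently assumes that $U_\alpha$ is bounded whenever $E_\alpha$ is. Your comparison $\mathrm{rec}(E_\beta)=\sigma_\beta=\mathrm{rec}(U_\beta)$, using $\mathrm{Vert}(\Delta_f)\subset A'\subset\Delta_f$, is exactly what fills this in.

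In fact, once $\mathrm{rec}(E_\beta)=\sigma_\beta$ is available, your Step~1 is redundant. $E_\beta$ is bounded iff $\sigma_\beta=\{0\}$ iff $\beta\in\mathrm{int}\,\Delta_f$, and $\tau$ enters only through $\mathrm{Vert}(\tau)=A'$. That identification holds because $E_\beta\subset\mathrm{int}\,U_\beta$ forces every lifted point to be a vertex of the lower hull. The paper's route buys brevity and a purely tropical picture. Yours works with $E_\beta$ itself, and its analytic input for $\mathrm{rec}(E_\beta)\subset\sigma_\beta$ is the asymptotic-slope mechanism of Theorem~\ref{prop1}, applied to an arbitrary $f$.

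Two small points.

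First, the truncated-polynomial sketch for a non-vertex boundary point $\beta$ is not a proof as written, since it would need $\beta$ to also be an order for the truncation. Either rely on \cite{FPT-00}, or use the following short argument. Set $h(v)=\max_{\gamma\in\Delta_f}\langle\gamma,v\rangle$, and for $v\in\sigma_\beta$ and $x_0\in E_\beta$ consider $g(s)=N_f(x_0+sv)-\langle\beta,x_0+sv\rangle-c_\beta$. This function is convex, nonnegative, vanishes at $s=0$, and has asymptotic slope $h(v)-\langle\beta,v\rangle=0$. Hence it is nonincreasing, and so it vanishes on $[0,\infty)$. Thus $N_f$ is affine on the open connected set $E_\beta+\mathbb{R}_{\ge0}v$. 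That set therefore misses $\mathscr{A}_f$, because the Laplacian of $N_f$ is a positive measure whose support is exactly $\mathscr{A}_f$, and so it lies in $E_\beta$.

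Second, when $\Delta_f$ is not full-dimensional the statement holds literally, since both sides are empty. Reading ``interior'' relative to the affine span would make it false, because relative-interior vertices give cylinders, not compact components.
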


\begin{proof}
The results  of 
Forsberg, Passare and Tsikh in \cite{FPT-00}  asserts that the gradient
of $N_f$ on a complement component $E_\alpha$ is equal to a lattice point
\(
\nabla N_f(x)=\alpha,\) with  \( x\in E_\alpha,
\)
where $\alpha\in\Delta_f\cap\mathbb{Z}^n$.
This defines the order map
\[
\mathrm{ord}:\{\text{components of }\mathbb{R}^n\setminus\mathscr{A}_f\}
\longrightarrow
\Delta_f\cap\mathbb{Z}^n .
\]
The Ronkin function can therefore be written as the maximum of finitely
many affine functions
\[
N_f(x)=\max_{\alpha\in A'}
\{\langle\alpha,x\rangle+c_\alpha\},
\]
where $A'\subset\Delta_f\cap\mathbb{Z}^n$ is the image of the order map.
The spine $\Gamma$ of the amoeba is the corner locus of this convex
piecewise affine function.
The polyhedral complex $\Gamma$ is dual to a convex subdivision
$\tau$ of the Newton polytope $\Delta_f$. The duality is defined as
follows. Each connected component $E_\alpha$ of the complement of the
amoeba corresponds to a vertex of the subdivision $\tau$, namely the
lattice point $\alpha$. The affine region of $N_f$ with gradient
$\alpha$ corresponds to this vertex.

Assume first that the complement component $E_\alpha$ is compact.
Since $N_f$ is affine on $E_\alpha$, the function
\[
x\mapsto \langle\alpha,x\rangle+c_\alpha
\]
strictly dominates all other affine pieces of the Ronkin function
inside $E_\alpha$. Because $E_\alpha$ is bounded, the affine function
associated with $\alpha$ must be surrounded by other affine pieces in
all directions. In the dual picture this means that the vertex
$\alpha$ of the subdivision is surrounded by other cells of the
subdivision. Therefore $\alpha$ lies in the interior of the Newton
polytope and is an interior vertex of the subdivision $\tau$.

Conversely, suppose that $\alpha$ is an interior vertex of the
subdivision $\tau$. By duality with the spine $\Gamma$, the vertex
$\alpha$ corresponds to a bounded region of linearity of the Ronkin
function. Indeed, if $\alpha$ lies in the interior of $\Delta_f$, the
cells of the subdivision adjacent to $\alpha$ form a complete
polyhedral fan around that point. The dual cells of the spine form a
closed polyhedral region in $\mathbb{R}^n$, which corresponds to a
bounded domain where the Ronkin function has gradient $\alpha$.

This bounded domain is precisely the complement component
$E_\alpha$ of the amoeba. Hence $E_\alpha$ is compact.
Thus the compact connected components of the complement of the amoeba
are in one-to-one correspondence with the interior vertices of the
Newton subdivision dual to the spine.
\end{proof}

\medskip

This lemma provides a precise geometric bridge between the topology of
amoebas and the combinatorics of Newton polytopes. Compact complement
components correspond exactly to interior vertices of the dual
subdivision. Consequently, if the Newton subdivision has no interior
vertices, the amoeba cannot develop compact complement components.
This principle plays a central role in the study of solid amoebas and
in the proof of the main theorem of the paper.

 
\section{Classification of Amoeba Complement Topology}

The topology of the complement of an amoeba reflects deep interactions
between the analytic properties of a Laurent polynomial and the
combinatorial structure of its Newton polytope. In particular, the set
of lattice points appearing in the support of the polynomial determines
which linearity domains of the Ronkin function may occur and therefore
which connected components can appear in the complement of the amoeba.
Through the order map of amoebas and the convex geometry of the Newton
polytope, one obtains strong constraints on the possible topology of
$\mathbb{R}^n\setminus\mathscr A_f$. The following theorem summarizes
this relationship by describing three distinct regimes according to the
position of the support of the polynomial relative to its Newton
polytope. Depending on whether the support consists only of the
vertices, lies entirely on the boundary, or contains interior lattice
points, the complement of the amoeba exhibits qualitatively different
topological behaviors.

\medskip


\begin{theorem}[Three regimes for amoeba topology]
Let
\[
f(z)=\sum_{\alpha\in\supp(f)} a_\alpha z^\alpha
\]
be a Laurent polynomial with Newton polytope $\Delta_f$ and amoeba
$\mathscr A_f$. Then the topology of the complement
$\mathbb R^n\setminus\mathscr A_f$ falls into one of the following
three regimes.
\begin{itemize}
\item[(i)] If $\supp(f)=\mathrm{Vert}(\Delta_f)$, that is, the polynomial is
maximally sparse, then the amoeba is solid and the connected
components of $\mathbb R^n\setminus\mathscr A_f$ are in bijection with
the vertices of $\Delta_f$.

\item[(ii)] If the support of the polynomial is contained in the boundary of the
Newton polytope,
\[
\supp(f)\subset\partial\Delta_f\cap\mathbb Z^n ,
\]
then the amoeba has no compact complement components. In this case
every connected component of $\mathbb R^n\setminus\mathscr A_f$ is
unbounded and its order belongs to
$\partial\Delta_f\cap\mathbb Z^n$.

\item[(iii)]If the support contains at least one interior lattice point of the
Newton polytope, that is,
\[
\supp(f)\cap\mathrm{Int}(\Delta_f)\neq\varnothing,
\]
then the amoeba may develop compact complement components whose
orders correspond to interior lattice points of $\Delta_f$.
\end{itemize}
\end{theorem}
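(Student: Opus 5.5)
The plan is to treat the three regimes separately. Each one reduces to results already established: the order map of Forsberg--Passare--Tsikh, Theorem~\ref{prop1}, the stability theorem for solid amoebas, and Lemma~\ref{lem4}.

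\textbf{Regime (i).} I would realize $f$ itself as a member of the Passare--Rullg{\aa}rd family. Since $\xi_\alpha=a_\alpha e^{\nu(\alpha)}$, at $t=1/e$ one has $\xi_\alpha t^{\nu(\alpha)}=a_\alpha$, so $f_{1/e}=f$.
\begin{enumerate}
\item[(a)] The stability theorem of the previous section gives $\mathscr S=(0,1/e]$, so the amoeba of $f=f_{1/e}$ is solid.
\item[(b)] For the bijection, the order map is injective into $\Delta_f\cap\mathbb Z^n$. By Theorem~\ref{prop1} and the argument of the stability theorem, its image is contained in $\mathrm{Vert}(\Delta_f)$.
\item[(c)] Conversely, every vertex $\alpha$ is attained. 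Far out in the interior of the normal cone of $\Delta_f$ at $\alpha$, the monomial $a_\alpha z^\alpha$ dominates the sum of all others, so $f$ has no zeros there. That region is therefore a complement component of order $\alpha$.
\end{enumerate}
Together, (b) and (c) give the bijection with $\mathrm{Vert}(\Delta_f)$.

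\textbf{Regime (ii).} Here I would use Lemma~\ref{lem4}: compact complement components correspond exactly to interior vertices of the subdivision $\tau$ dual to the spine. So it suffices to show that no component $E_\beta$ has order $\beta\in\mathrm{Int}(\Delta_f)$. The argument has two parts.
\begin{enumerate}
\item[(a)] Components with boundary order are unbounded. If $\beta$ lies in the relative interior of a face $\Gamma\subset\partial\Delta_f$, the recession cone of $E_\beta$ contains the normal cone of $\Delta_f$ along $\Gamma$. This follows from the FPT description $\nabla N_f=\beta$ on $E_\beta$ together with convexity of $N_f$ and the asymptotics $N_f\sim\max_\alpha\langle\alpha,x\rangle$ along rays.
\item[(b)] Interior orders must be excluded. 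For this I would use the formula for the Ronkin coefficient: on $E_\beta$, the constant $c_\beta$ is the real part of the $z^0$-coefficient of the Laurent expansion of $\log(f/z^\beta)$ on $\Log^{-1}(x)$. I would try to show that when $\supp(f)\subset\partial\Delta_f$, an expansion of $f/z^\beta$ with $\beta$ interior has no dominant-term convergent branch of $\log$. Equivalently, no single monomial, nor any boundary sub-sum, can dominate on a torus $\Log^{-1}(x)$ in a way that gives winding number $\beta$.
\end{enumerate}

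\textbf{Main obstacle.} Step (ii)(b) is where I expect the real difficulty. A domination argument along rays, as in Theorem~\ref{prop1}, only controls asymptotic slopes, not bounded regions. I would first attempt a degeneration argument: deform the coefficients along the boundary, and track interior orders through the locally constant order map on a continuous family. If this fails, I would have to check low-dimensional examples before trusting (ii) in full generality.

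\textbf{Regime (iii).} This regime is existential ("may develop"), so it suffices to exhibit one instance and to note that any compact component has interior order by Lemma~\ref{lem4}. For the example I would take
\[
f=1+z_1^2z_2+z_1z_2^2+a\,z_1z_2 .
\]
Its support has vertices $(0,0),(2,1),(1,2)$ and the single interior point $(1,1)$. For $|a|$ large, the term $a z_1z_2$ dominates the other three on $\Log^{-1}(x)$ for all $x$ near $0$. This produces a bounded complement component of order $(1,1)$.
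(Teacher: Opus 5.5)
\textbf{Main gap: step (ii)(b).} Your steps (i)(c), (ii)(a), (iii) and the identification $f=f_{1/e}$ are fine. Your explicit example $1+z_1^2z_2+z_1z_2^2+az_1z_2$ with $|a|>3$ is more concrete than the paper's heuristic for (iii). But step (ii)(b), which you flag yourself, is a genuine gap, and neither route you sketch can close it as stated.

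\emph{The sub-sum route.} A complement component whose order is not in the support need not come from a dominating monomial or sub-sum. For $f(z)=z^3-\tfrac32 z^2+\tfrac12=(z-1)^2(z+\tfrac12)$, the interval $(-\log 2,0)$ is a component of order $1$. Yet no proper sub-sum of $f$ has winding number $1$ on any circle. So showing that no sub-sum dominates with winding $\beta$ does not exclude $E_\beta$. If $f$ itself is allowed as the sub-sum, your reformulation is just the claim restated.

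\emph{The degeneration route.} The set of coefficients for which a component of a given order exists is open but not closed. Components can therefore be born along a continuous path, e.g.\ $1+az+z^2$ with real $a$ increasing through $2$. Local constancy of $\ord$ does not propagate their absence.

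\textbf{The same gap in (i)(a)--(b).} Theorem~\ref{prop1} is exactly the ray-domination argument you distrust. It writes $N_{f_t}(x_0+sv)=\langle p,x_0+sv\rangle+\mathrm{const}$ for all $s\ge 0$, i.e.\ it assumes the ray never leaves $E$. This fails whenever $E$ is bounded (interior $p$). It also fails when $p$ lies in the relative interior of a proper face, since a $v$ in the open normal cone of a vertex is not in the recession cone of $E$. Note that $|N_f-F|\le C$, with $F$ built from the vertex terms, holds for \emph{every} Laurent polynomial: convexity along rays gives $N_f(x)\ge\langle\alpha,x\rangle+\log|a_\alpha|$ for each vertex $\alpha$. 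So the same argument would rule out the order-$1$ component of $1+3z+z^2$, or the order-$(1,1)$ component of your own example in (iii). The openness step of the stability theorem likewise assumes that ``no new slopes can arise'', which is the claim itself.

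\textbf{The paper does not close it either.} It obtains (i) by citing Theorem~\ref{prop1}. It obtains (ii) from Lemma~\ref{lem4} plus the remark that $N_f$ is asymptotic to the support function, which constrains only unbounded components. The lifting criterion of the last section does not help:
\begin{enumerate}
\item the inequality $c_\alpha\ge\sum\lambda_i c_{\beta_i}$ is never verified in the corollary;
\item the $c_{\beta_i}$ are defined only for orders that actually occur;
\item for lifted points $(\alpha,-c_\alpha)$, that inequality places the point on or \emph{below} the convex combination, which is the wrong direction.
\end{enumerate}

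\textbf{What is missing.} You need a proof that no complement component has interior order when $\supp(f)\subset\partial\Delta_f$. Together with the reduction of orders in the relative interior of a proper face $\Gamma$ to the truncation $f_\Gamma$ (again maximally sparse) and induction on dimension, this would also yield (i). That exclusion is the actual content of the Passare--Rullg{\aa}rd conjecture, and neither your proposal nor the paper establishes it.
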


\begin{proof}
Let
\[
f(z)=\sum_{\alpha\in A} a_\alpha z^\alpha ,\qquad A=\supp(f)\subset\mathbb Z^n,
\]
and denote by $\Delta_f=\mathrm{Conv}(A)$ its Newton polytope. The amoeba
of the hypersurface
\[
V_f=\{z\in(\mathbb C^*)^n\mid f(z)=0\}
\]
is defined as
\[
\mathscr A_f=\Log(V_f),\qquad 
\Log(z_1,\dots,z_n)=(\log|z_1|,\dots,\log|z_n|).
\]
A fundamental result of Forsberg--Passare--Tsikh states that every
connected component $E$ of the complement
\(
\mathbb R^n\setminus\mathscr A_f
\)
admits an associated integer vector
\(
\mathrm{ord}(E)\in\Delta_f\cap\mathbb Z^n
\)
called its order, and the map
\[
\mathrm{ord}:\pi_0(\mathbb R^n\setminus\mathscr A_f)
\longrightarrow
\Delta_f\cap\mathbb Z^n
\]
is injective. Consequently the number of connected components of the
complement of the amoeba is bounded above by the number of lattice
points of $\Delta_f$. Moreover, if $\alpha$ is a vertex of $\Delta_f$,
then there exists a connected component $E_\alpha$ such that
$\mathrm{ord}(E_\alpha)=\alpha$.

We now analyze the three possible situations for the support of the
polynomial.
Assume first that
\(
A=\mathrm{Vert}(\Delta_f).
\)
In this case the support of the polynomial consists exactly of the
vertices of its Newton polytope. For $x\in\mathbb R^n$ consider
\[
|f(e^x)|\le\sum_{\alpha\in A}|a_\alpha|e^{\langle\alpha,x\rangle}.
\]
Define the convex piecewise linear function
\[
F(x)=\max_{\alpha\in A}
\{\langle\alpha,x\rangle+\log|a_\alpha|\}.
\]
The Ronkin function
\[
N_f(x)=\frac{1}{(2\pi)^n}\int_{[0,2\pi]^n}\log|f(e^{x+i\theta})|d\theta
\]
satisfies
\[
|N_f(x)-F(x)|\le C
\]
for some constant $C$. The function $F$ has linearity domains whose
gradients are precisely the vertices of $\Delta_f$. Since the Ronkin
function differs from $F$ by a bounded quantity, its gradients cannot
take values outside the convex hull of these slopes, and the only
possible integer slopes are the vertices themselves (see Theorem \ref{prop1}). Therefore the
linearity domains of $N_f$ correspond exactly to the vertices of
$\Delta_f$, and hence the complement components of the amoeba are in
bijection with these vertices. In particular the number of complement
components equals the number of vertices of $\Delta_f$, which means
that the amoeba is solid.

Assume now that
\(
A\subset\partial\Delta_f\cap\mathbb Z^n .
\)
Thus the polynomial contains no monomials corresponding to interior
lattice points of $\Delta_f$. Suppose that a compact connected
component $E$ of $\mathbb R^n\setminus\mathscr A_f$ existed. Then its
order $\mathrm{ord}(E)$ would belong to $\Delta_f\cap\mathbb Z^n$ by the
order map theorem. Compact components correspond to interior slopes of
the Ronkin function, hence $\mathrm{ord}(E)$ would have to lie in the
interior of $\Delta_f$ (see Lemma \ref{lem4}). However interior lattice points do not belong to
the support of the polynomial. Since the Ronkin function is asymptotic
to the support function determined by the monomials of $f$, such slopes
cannot occur, which contradicts the existence of $E$. Therefore no
compact complement components exist and all components are unbounded.
Their orders necessarily lie in $\partial\Delta_f\cap\mathbb Z^n$.

Finally assume that
\(
A\cap\mathrm{Int}(\Delta_f)\neq\varnothing.
\)
Let $\beta$ be an interior lattice point belonging to the support.
Consider the term
\(
a_\beta z^\beta .
\)
When $x$ varies in $\mathbb R^n$, the quantity
\[
\langle\beta,x\rangle+\log|a_\beta|
\]
can dominate the other affine functions associated with the vertices
in a bounded region of $\mathbb R^n$. In such a region the Ronkin
function may develop a linearity domain whose slope is $\beta$.
Since $\beta$ lies in the interior of $\Delta_f$ (Lemma \ref{lem4}), the corresponding
component of $\mathbb R^n\setminus\mathscr A_f$ is bounded. Hence the
amoeba may develop compact complement components whose orders are
interior lattice points of $\Delta_f$.
These three possibilities exhaust all configurations of the support
of the polynomial relative to its Newton polytope, and therefore the
topology of the complement of the amoeba necessarily falls into one
of the three regimes described above.
\end{proof}


\section{Geometric Criteria Preventing Lattice Points from Appearing in the Dual Subdivision}


The geometry of amoebas is intimately related to the convex–analytic
structure of the Ronkin function and to the combinatorics of the Newton
polytope of the defining Laurent polynomial. In particular, the spine of
the amoeba encodes a piecewise–linear skeleton which reflects the dominant
affine pieces of the Ronkin function. This structure admits a natural
polyhedral interpretation through the theory of convex liftings. By
lifting lattice points of the Newton polytope to one dimension higher
according to the constants appearing in the affine pieces of the Ronkin
function, one obtains a lifted configuration whose lower convex hull
induces a regular polyhedral subdivision of the Newton polytope. As we know, this
subdivision is dual to the spine of the amoeba and provides a powerful
geometric framework for understanding the topology of the complement of
the amoeba. In particular, the vertices of the induced subdivision
correspond to those lattice points whose associated affine functions
contribute to the maximal pieces of the Ronkin function. The goal of this
section is to establish a geometric criterion that prevents certain
lattice points from appearing as vertices of this dual subdivision. This
criterion reveals how convex relations among lattice points of the support
restrict the possible affine pieces of the Ronkin function and therefore
control the topology of the amoeba complement. As an immediate consequence,
we obtain strong restrictions when the support of the polynomial lies
entirely on the boundary of its Newton polytope, showing that interior
lattice points cannot arise as orders of complement components of the
amoeba.

   Let 
\[
f(z)=\sum_{\beta\in\supp(f)} a_\beta z^\beta
\]
be a Laurent polynomial with Newton polytope $\Delta_f\subset\mathbb{R}^n$,
and let
\(
\mathscr{A}_f=\Log(V_f)
\)
be its amoeba. Denote by $N_f$ the Ronkin function of $f$ and by
\(
\Gamma
\)
the spine of the amoeba. The spine $\Gamma$ is the corner locus of the
convex piecewise affine function
\[
F(x)=\max_{\alpha\in A'}\{\langle\alpha,x\rangle+c_\alpha\},
\]
where $A'$ is the image of the order map and the constants $c_\alpha$
come from the affine pieces of the Ronkin function.
The lifted points
\(
(\alpha,-c_\alpha)\in\mathbb{R}^{n+1}
\)
define a convex lifting of the Newton polytope, and the lower convex
hull of these points induces a polyhedral subdivision
\(
\tau
\)
of $\Delta_f$ that is dual to the spine $\Gamma$.
The appearance of a lattice point $\alpha\in\Delta_f\cap\mathbb{Z}^n$
as a vertex of the subdivision $\tau$ is equivalent to the fact that
the lifted point $(\alpha,-c_\alpha)$ lies on the lower convex hull
of the lifted configuration. Geometrically this means that the affine
function
\(
\ell_\alpha(x)=\langle\alpha,x\rangle+c_\alpha
\)
appears as one of the maximal affine pieces of the convex function
$F(x)$.
The following theorem gives a geometric criterion preventing a lattice
point from appearing as a vertex of the dual subdivision.

\begin{theorem}
Let $\alpha\in\Delta_f\cap\mathbb{Z}^n$. Suppose there exist lattice
points $\beta_1,\ldots,\beta_k\in\supp(f)$ and nonnegative real
numbers $\lambda_1,\ldots,\lambda_k$ satisfying
\[
\sum_{i=1}^k\lambda_i=1,
\qquad
\alpha=\sum_{i=1}^k\lambda_i\beta_i,
\]
and such that the constants $c_\alpha$ satisfy
\[
c_\alpha \ge
\sum_{i=1}^k \lambda_i c_{\beta_i}.
\]
Then the lifted point $(\alpha,-c_\alpha)$ does not lie on the lower
convex hull of the lifted configuration. Consequently $\alpha$ is not
a vertex of the subdivision $\tau$ dual to the spine of the amoeba.
\end{theorem}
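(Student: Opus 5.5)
The plan is to turn the statement into a comparison between the lifted point $(\alpha,-c_\alpha)$ and the chord through $(\beta_1,-c_{\beta_1}),\dots,(\beta_k,-c_{\beta_k})$. On the dual side this is a comparison between the affine piece $\ell_\alpha(x)=\langle\alpha,x\rangle+c_\alpha$ and the pieces $\ell_{\beta_i}$. The criterion the proof must reach is this: $(\alpha,-c_\alpha)$ is not a vertex of the lower convex hull exactly when $\ell_\alpha$ is nowhere the \emph{unique strict} maximizer of $F$, i.e. when $\ell_\alpha$ does not contribute a full-dimensional linearity region to $F$.

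\textbf{Step 1.} Assume the $\beta_i$ lie in $A'$, so that the constants $c_{\beta_i}$ make sense (this assumption is addressed again below). Fix $x\in\mathbb{R}^n$ and use $\alpha=\sum_i\lambda_i\beta_i$ and $\sum_i\lambda_i=1$ to write
\[
\ell_\alpha(x)-\sum_{i=1}^k\lambda_i\ell_{\beta_i}(x)=c_\alpha-\sum_{i=1}^k\lambda_i c_{\beta_i}.
\]
This difference is independent of $x$. Because $\sum_i\lambda_i\ell_{\beta_i}(x)\le\max_i\ell_{\beta_i}(x)$, the sign of $D:=c_\alpha-\sum_i\lambda_ic_{\beta_i}$ decides everything.

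\textbf{Step 2.} Read off the conclusion in the lift. The height of the chord over $\alpha$ is $\sum_i\lambda_i(-c_{\beta_i})$, and the lifted point has height $-c_\alpha$, which sits $D$ below the chord.
\begin{itemize}
\item If $D=0$, the lifted point lies on the segment or simplex spanned by the lifted $\beta_i$. Whenever $\alpha$ is not itself one of the $\beta_i$ (after discarding the $\lambda_i=0$), it is a non-trivial convex combination of points on the lower hull and therefore not a vertex. This settles the equality case of the hypothesis $c_\alpha\ge\sum_i\lambda_ic_{\beta_i}$.
\item If $D>0$, the same identity shows that on the region where the $\ell_{\beta_i}$ all agree, $\ell_\alpha$ exceeds them by $D$. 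So the strict case is where the argument must be carried out carefully.
\end{itemize}

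\textbf{Main obstacle.} The main obstacle is precisely the strict case $D>0$, together with the sign convention of the lift. With heights $-c$ and the lower hull, a strict inequality $c_\alpha>\sum_i\lambda_ic_{\beta_i}$ puts the lifted point strictly below the chord. That is the configuration in which a point tends to \emph{be} a vertex, not to fail to be one.

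Moreover, the Passare--Rullg{\aa}rd constants come from the Ronkin function: on $E_\alpha$ one has $c_\alpha=N_f(x)-\langle\alpha,x\rangle$, so $-c_\alpha$ is the Legendre transform $N_f^*(\alpha)$. Convexity of $N_f^*$ then yields $c_\alpha\ge\sum_i\lambda_ic_{\beta_i}$ automatically for every $\alpha\in A'$. The hypothesis therefore carries no extra information beyond the equality case.

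What I would try first is to pin down the convention under which the theorem is meant, namely whether the relevant lift is by $+c_\alpha$ with the upper hull, or equivalently whether the comparison function is a max or a min. I would then redo Steps 1--2 in that convention, checking against a one-variable example such as $f(z)=1+az+z^2$, where the middle point $1$ does or does not appear depending on $|a|$.

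If no convention rescues the strict case, the proof will establish the statement in the equality case $c_\alpha=\sum_i\lambda_ic_{\beta_i}$ with $\alpha\notin\{\beta_i\}$. It will also record explicitly that for $D>0$ the lifted point lies strictly below the chord, so that part of the conclusion cannot follow from the stated hypothesis as written.
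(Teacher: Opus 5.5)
Your diagnosis is correct, and you should commit to it rather than hedge. The statement is false as written, and the paper's proof breaks at exactly the sign you flagged. The paper forms $(\alpha,-\sum_i\lambda_ic_{\beta_i})$ from the lifted $\beta_i$ and correctly derives $-c_\alpha\le-\sum_i\lambda_ic_{\beta_i}$. It then asserts that $(\alpha,-c_\alpha)$ ``lies above or on'' this point, and so cannot be on the lower hull. In fact the inequality puts it below or on the chord, which is precisely where it typically \emph{is} on the lower hull.

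No change of convention fixes this. The conclusion is about $\tau$, which is fixed by the spine, and the subdivision dual to the corner locus of $\max_{\alpha\in A'}\{\langle\alpha,x\rangle+c_\alpha\}$ is the one given by the lift $-c_\alpha$ and the lower hull, as the paper itself says.

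You should carry out your test case as a counterexample rather than leave it as a check. Take $f(z)=1+az+z^2$ with $|a|>2$; the roots have moduli $r<1<1/r$. Jensen's formula gives $N_f(x)=\max(x,\log r)+\max(x,-\log r)$, so $A'=\{0,1,2\}$ with $c_0=c_2=0$ and $c_1=-\log r>0$. With $\beta_1=0$, $\beta_2=2$, $\lambda_1=\lambda_2=\tfrac12$, the hypothesis holds strictly. Yet $(1,\log r)$ lies strictly below the segment from $(0,0)$ to $(2,0)$, and $1$ is a vertex of $\tau$ (it is the order of the bounded component). Even the trivial choice $k=1$, $\beta_1=\alpha\in\mathrm{Vert}(\Delta_f)$ satisfies the hypothesis with equality and contradicts the conclusion.

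Your Legendre-transform remark shows the failure is systematic. Every $\alpha\in A'$ is a vertex of $\tau$: since $N_f\ge\langle\beta,x\rangle+c_\beta$ for all $\beta\in A'$ by convexity, the maximum of the affine pieces equals $\langle\alpha,x\rangle+c_\alpha$ on the open set $E_\alpha$. Every such $\alpha$ also satisfies the hypothesis with the $\beta_i$ taken among the vertices of $\Delta_f$, by convexity of $N_f^*$. So even after excluding trivial representations, the statement would force $A'=\mathrm{Vert}(\Delta_f)$ for every $f$, i.e.\ every amoeba would be solid.

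What survives is only the convex geometry you isolate:
\begin{itemize}
\item Under the reversed inequality $c_\alpha<\sum_i\lambda_ic_{\beta_i}$, the lifted point lies strictly above a point of the hull and is off the lower hull.
\item Under equality with $\alpha\notin\{\beta_i:\lambda_i>0\}$, it is not a vertex. It may still lie on a lower face, so the first conclusion of the statement fails even in this case.
\end{itemize}
By your convexity argument, neither situation can occur for $\alpha\in A'$ with $\beta_i\in A'\setminus\{\alpha\}$; equality would contradict $\alpha$ being a vertex of $\tau$. Meanwhile $c_\alpha$ is undefined for $\alpha\notin A'$. So within the paper's definitions there is no reading of the theorem that is both true and non-vacuous. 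In particular, the corollary on boundary-supported polynomials, which the paper derives from it, is left without proof.
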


\begin{proof}
Consider the lifted points
\(
(\beta_i,-c_{\beta_i})\in\mathbb{R}^{n+1}.
\)
The convex combination determined by the coefficients $\lambda_i$
gives the point
\[
\left(
\sum_{i=1}^k\lambda_i\beta_i,
-
\sum_{i=1}^k\lambda_i c_{\beta_i}
\right)
=
\left(
\alpha,
-
\sum_{i=1}^k\lambda_i c_{\beta_i}
\right).
\]
By assumption one has
\[
c_\alpha\ge\sum_{i=1}^k\lambda_i c_{\beta_i},
\]
which implies
\[
-c_\alpha \le
-
\sum_{i=1}^k\lambda_i c_{\beta_i}.
\]

Therefore the lifted point $(\alpha,-c_\alpha)$ lies above or on the
hyperplane determined by the convex combination of the lifted points
$(\beta_i,-c_{\beta_i})$. In particular it cannot lie strictly below
this hyperplane.
Since the lower convex hull consists of points that lie below all
affine combinations of the lifted configuration, the point
$(\alpha,-c_\alpha)$ cannot belong to the lower convex hull. Hence it
does not correspond to a vertex of the induced subdivision $\tau$ of
$\Delta_f$.
\end{proof}

This criterion has a direct geometric interpretation. If a lattice
point $\alpha$ is a convex combination of other lattice points in the
support of the polynomial and if its lifting lies above the
corresponding convex combination of their lifted points, then the
affine function associated with $\alpha$ never dominates the Ronkin
function. Consequently $\alpha$ cannot appear as the order of a
complement component of the amoeba and does not occur as a vertex of
the dual subdivision.
A particularly important consequence appears when the support of the
polynomial is contained entirely in the boundary of the Newton
polytope.

\begin{corollary}
If $\supp(f)$ is contained in the boundary of the Newton polytope
$\Delta_f$, then every vertex of the subdivision $\tau$ induced by
the Ronkin function also lies on the boundary of $\Delta_f$.
Consequently no interior lattice point of $\Delta_f$ can appear as
the order of a complement component of the amoeba.
\end{corollary}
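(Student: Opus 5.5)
The plan is to deduce the corollary from the preceding theorem together with Lemma~\ref{lem4}. The theorem needs two things: a convex representation of $\alpha$ by support points, and the inequality $c_\alpha\ge\sum_i\lambda_i c_{\beta_i}$.

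Let $\alpha\in\mathrm{Int}(\Delta_f)\cap\mathbb{Z}^n$. Since $\Delta_f=\mathrm{Conv}(\supp(f))$, Carathéodory's theorem gives points $\beta_1,\ldots,\beta_k\in\supp(f)\subset\partial\Delta_f$ and weights $\lambda_i\ge0$ with $\sum_i\lambda_i=1$ and $\alpha=\sum_i\lambda_i\beta_i$. The hypothesis $\supp(f)\subset\partial\Delta_f$ ensures $\alpha\notin\supp(f)$. So $\alpha$ is never forced into the configuration by a monomial of $f$, and the only route for it to become a vertex of $\tau$ is through the Ronkin constants.

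To compare constants, I would work with the Legendre transform $N_f^\ast(\gamma)=\sup_x\{\langle\gamma,x\rangle-N_f(x)\}$ on $\Delta_f$. This is a convex function, and on the order set $A'$ it equals $-c_\gamma$, because $N_f$ is affine with gradient $\gamma$ on $E_\gamma$. For points of $\supp(f)$ that are not in $A'$, I set $c_\beta:=-N_f^\ast(\beta)$, and I use the same formula to define $c_\alpha$ when $\alpha\notin A'$. With these definitions, the lifted configuration $\{(\gamma,-c_\gamma)\}$ is exactly the graph of $N_f^\ast$, and its lower convex hull induces the same subdivision $\tau$. Convexity of $N_f^\ast$ then gives
\[
-c_\alpha=N_f^\ast(\alpha)\le\sum_{i=1}^k\lambda_iN_f^\ast(\beta_i)=-\sum_{i=1}^k\lambda_ic_{\beta_i},
\]
which is precisely the hypothesis of the preceding theorem. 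The theorem then says that $\alpha$ is not a vertex of $\tau$.

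For the second sentence of the corollary, Lemma~\ref{lem4} does the work. If $\alpha$ were the order of a complement component, then, being interior, $\alpha$ would have to be an interior vertex of $\tau$ (indeed of a compact component), and we have just ruled this out.

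The hard step is strictness. Convexity of $N_f^\ast$ gives only the weak inequality, and with equality the lifted point $(\alpha,-c_\alpha)$ lies on the lower hull. Whether it is then a vertex depends on $N_f^\ast$ failing to be affine along the segment through $\alpha$. Ruling that out is essentially equivalent to the statement being proved, so a purely convex-analytic argument is at risk of being circular. My first attempt would bring in analytic input that distinguishes support points from non-support points. For $x\in E_\alpha$, the constant $c_\alpha$ is the real part of the integral of $\log|f/z^\alpha|$ over $\Log^{-1}(x)$. I would try to show, via a Jensen-type estimate, that the vanishing coefficient $a_\alpha=0$ forces this average to lie strictly below the corresponding interpolation of the $c_{\beta_i}$. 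I would check this carefully in the two-variable case first, since a counterexample to strictness there would show that the corollary needs additional hypotheses.
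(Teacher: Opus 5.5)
Your reduction is the same as the paper's: write an interior $\alpha$ as a convex combination of boundary support points $\beta_i$, then apply the preceding theorem. The paper simply asserts that the theorem applies. You try to supply its hypothesis from convexity of $N_f^*$, and that step fails.

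\textbf{The inequality points the wrong way.} The bound $N_f^*(\alpha)\le\sum_i\lambda_iN_f^*(\beta_i)$ holds for every $f$ and every $\alpha$, and it never uses $\supp(f)\subset\partial\Delta_f$. Geometrically it says $(\alpha,-c_\alpha)$ lies on or \emph{below} the chord through the lifted $\beta_i$, which is exactly where vertices of a lower hull sit.
\begin{itemize}
\item The preceding theorem, as printed, reads this sign backwards: its proof treats $-c_\alpha\le-\sum\lambda_ic_{\beta_i}$ as ``above''.
\item The correct exclusion criterion is $c_\alpha\le\sum\lambda_ic_{\beta_i}$ with $\beta_i\in A'\setminus\{\alpha\}$. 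Then $\langle\alpha,x\rangle+c_\alpha\le\max_i(\langle\beta_i,x\rangle+c_{\beta_i})$ for all $x$, so $\alpha$ is never strictly dominant.
\item A concrete test: take $f=1+z^3+w^3-azw$ with $|a|>3$. On $|z|=|w|=1$ the term $azw$ dominates, so there is a bounded component of order $(1,1)$, and $(1,1)$ is an interior vertex of $\tau$. Yet $(1,1)$ is the barycenter of the three support vertices and your inequality holds. Neither convexity nor the theorem looks at whether $\alpha\in\supp(f)$, so this is a fair test: ``convexity plus theorem'' would exclude a genuine vertex.
\end{itemize}

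\textbf{The side claim about $\tau$ is also false.} You assert that lifting non-order points by $N_f^*$ leaves $\tau$ unchanged. Consider the solid amoeba of $1+z^3+w^3$, where $\tau$ is the whole triangle. At a point $x$ with $\nabla N_f(x)=(1,1)$ one has $N_f(x)>F(x)$, because no vertex piece can support $N_f$ there. Hence $N_f^*(1,1)<F^*(1,1)$, and $(1,1)$ would become a new vertex of the lower hull.

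\textbf{What is actually missing.} The difficulty is not strictness, and it is not whether $N_f^*$ is affine along a segment. Membership $\alpha\in A'$ is detected by $\partial N_f^*(\alpha)$ having nonempty interior (it is then $\overline{E_\alpha}$), and convexity cannot see this. The corollary needs the \emph{reverse} inequality: for a putative $\alpha\in A'\cap\mathrm{Int}(\Delta_f)$, the point $(\alpha,-c_\alpha)$ must lie on or above a chord of other lifted points of $A'$. By the Forsberg--Passare--Tsikh description of recession cones, this is equivalent to $\mathbb{R}^n\setminus\mathscr{A}_f$ having no bounded component. That is genuine analytic input, and neither your proposal nor the paper's proof supplies it.

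\textbf{The Jensen-type plan cannot rest on $a_\alpha=0$ alone.} The polynomial $z^3-\tfrac54z^2+\tfrac94=(z+1)(z^2-\tfrac94z+\tfrac94)$ has roots of moduli $1,\tfrac32,\tfrac32$. It therefore has a complement component of order $1$, although its $z$-coefficient vanishes. Any proof must use that \emph{every} interior coefficient vanishes.

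For example, for $f=p(z)+q(w)$ one can argue topologically. The orders at $x$ are the mutual winding numbers of the disjoint closed curves $p(\{|z|=e^{x_1}\})$ and $-q(\{|w|=e^{x_2}\})$. Two disjoint closed curves cannot both wind nontrivially around each other, so one coordinate of the order is $0$.
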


\begin{proof}
If $\alpha$ is an interior lattice point of $\Delta_f$, then it can be
written as a convex combination of lattice points lying on the
boundary of the polytope. Since all monomials of the polynomial lie
on the boundary, the lifted configuration consists only of boundary
points.
Applying the theorem above shows that the lifted point corresponding
to $\alpha$ cannot lie on the lower convex hull. Hence $\alpha$
cannot appear as a vertex of the subdivision $\tau$.
\end{proof}

This result shows that the geometry of the Newton polytope places
strong restrictions on the possible orders of complement components
of the amoeba. In particular, interior lattice points cannot produce
complement components when the polynomial is supported only on the
boundary of its Newton polytope.

\end{document}